\newtheorem{thm}{Theorem}
\newtheorem{remark}{Remark}
\newtheorem{lem}{Lemma}
\title{Convex Shape  Priors for Level Set Representation}
\author{Shousheng Luo and Xue-Cheng Tai
\thanks{Shousheng Luo is with  Beijing Computational Science Research Center, Haidian, Beijing, 100193, and also
School of Mathematics and Statistics, Henan University, 475001, Kaifeng, China, and
Department of Mathematics, Hong Kong University of Science and Technology, Kowloon, Hong Kong.
e-mail: sluo@henu.edu.cn. }
\thanks{Xuecheng Tai is with Department of Mathematics,  Hong Kong Baptist University, Kowloon Tong, Hong Kong, e-mail: tai@mi.uib.no.}
}
\begin{document}
\maketitle
\begin{abstract}
For many applications, we need to use techniques to represent convex shapes and objects. In this work, we use level set method to represent shapes and find a necessary and sufficient condition on the level set function to guarantee the convexity of the represented shapes. We take image segmentation as an example to apply our technique. Numerical algorithm is developed to solve the variational model.
In order to improve the performance of segmentation for complex images, we also incorporate landmarks into the model. One option is to specify points that the object boundary must contain. Another option is to specify points that the foreground (the object) and the background must contain.
Numerical experiments on different
images validate the efficiency of the proposed models and algorithms. We want to emphasize that the proposed technique could be used for general shape optimization with convex shape prior.
For other applications, the numerical algorithms need to be extended and modified.
\end{abstract}
{\bf Key words:} Segmentation; Shape; Numerical algorithm; Object representation; Convex shape prior


\section{Introduction}
Image segmentation is a fundamental task in image sciences and arises from a wide range of applications, such as computer vision, medial imaging and analysis.
Numerous models and algorithms have been proposed for this problem \cite{Mumford1989},\cite{Chan2001},\cite{Caselles1995},\cite{Ashburner2005Unified}.
 These models can be roughly categorized into region based methods and edge based methods. Active contour method proposed in \cite{Caselles1995}
is one of the famous edge based models. Mumford-Shah's (MS) functional
is a fundamental region based method \cite{Mumford1989}.
Different variants and approximation models were proposed.
The Chan-Vese (CV) model, proposed in \cite{Chan2001}, is exactly a piece-wise constant case of the MS model.

Existing models are usually based on the image intensity values.
These approaches can not present desirable segmentation results for complex real images because the interested objects in the images are usually occluded by others, or can not be distinguished from others in low contrast images. For example in medical imaging, the organs, such as liver, kidney and heart, usually have similar intensity values in computed tomography image.
This turns out that a prior shape of the object should be incorporated in a
proper way to get a meaningful segmentation especially under low contrast, occlusions and noisy conditions.

Various image segmentation models with shape priors were proposed in the literature. In \cite{Leventon2003Statistical}, the
geodesic active contour model is extended by incorporating shape information into the contour evolution process.
Because an object in the plane corresponds a unique signed distance function (SDF, a special level set function),  shape priors
are usually described by the corresponding SDF.

In \cite{Rousson2002Shape},
an energetic model is proposed to incorporate shape prior by level set representation, which can deal with noisy, occluded and corrupted image segmentation problem. In \cite{Cremers2003Towards}, a labelling function is introduced to
enforce shape prior. This method is developed in \cite{Chan2005Level} to allow scaling, translation and rotation of prior shape.
In \cite{Thiruvenkadam2007Segmentation}, a
model incorporating shape prior knowledge is proposed
for multiple objects segmentation under occlusions and subtle boundary condition.
These approaches mentioned above are all based on a shape training set.
The studies on general shape priors, such as star shape and  convex shape, attracted more and more attentions recently \cite{Gorelick2017Convexity},\cite{Yan2018},\cite{Bae2017Augmented},
\cite{Royer2015Convexity},\cite{Yuan2012},\cite{Yuan2012An}.

In this paper, we focus on the models and algorithms for image
segmentation with convexity prior.
The importance of convexity for shape completion has attracted attentions for a long time, see
\cite{Liu1999The}. In practice, a lot of objects are convex, such as
buildings, organs and cells. Several approaches have been proposed for
image segmentation with convexity prior.
In \cite{Royer2015Convexity}, a graph-based model is proposed for multiple phases
segmentation with convexity constraints by extending the minimum cost multi-cut method.

In \cite{Gorelick2017Convexity}, a
discrete segmentation model is proposed by incorporating convexity prior.
Based on the definition of convexity,  i.e. the
line segment between any two points belonging to the shape shall be inside it, a  convexity model is proposed in \cite{Gorelick2017Convexity} by penalizing 1-0-1 configuration on all line segments in the image domain. Here 1 or 0 denotes the pixel belonging to the object or not.
Efficient algorithm based on trust region approach is investigated for the proposed model by using linear and quadratic approximations.

In \cite{Bae2017Augmented},
an $L^1$-Euler Elastica energy-based model is studied.
By imposing a large weight on the Euler Elastica regularization term,
 one can get convex shape segmentation result
 because the absolute curvature integral along any
 closed curve is larger than $2\pi$ unless the curve is convex.

In \cite{Yan2018}, it is  proved that the shape convexity is
guaranteed by the convexity of its corresponding signed distance function, which is equivalent
to the nonnegativity of its Laplacian (a linear inequality constraint).
This representation technique of convexity shape is incorporated with Chan-Vese model \cite{Chan2001} for
two-phase image segmentation. Therefore, the proposed model in \cite{Yan2018}
contains two constraints, both of which make the solution to be
a convex signed distance function.
In \cite{Yan2018}, a numerical method is used to handle the Chan-Vese functional minimization and
the two constraints separately.


In this work, we develop and improve the method in \cite{Yan2018} theoretically and numerically. 
We prove the equivalence between the shape convexity and the Laplacian nonnegativity of its corresponding SDF by geometric method.
For a given shape, we prove that all the sublevel sets of its SDF are convex if and only if the shape is convex. Therefore, all level sets of the SDF are
convex curves, and their curvatures, which are computed by the Laplacian of SDF, are nonnegative.

The proposed equivalence between convex shape and its SDF could be used for general shape optimization problems. In this paper, we take image segmentation problem as an example to apply this technique.
We consider probability-based models for two-phase image segmentation with convexity shape prior.
The probabilities are estimated by Gaussian mixture method (GMM) and the similarities between the given priors
of foreground and background.
Moreover,
in order to improve segmentation results for challenging images we
incorporate boundary landmarks with Gaussian mixture method (GMM).

{ For the sake of computational efficiency, the convex constraint is imposed on a subregion (containing the segmented object) of the image domain, and proper boundary conditions are given on the image boundary.}
By introducing two auxiliary variables, alternate direction method of multiplier (ADMM),
which is widely used in image sciences \cite{Wu2010},\cite{Tai2011A},\cite{Zhu2013Image},\cite{Bae2017Augmented},
is used to solve the proposed models. The proposed algorithm can handle the objective energy functional and
constraints simultaneously, and all variables have closed form solutions except the SDF update in the iteration procedure. SDF update needs to solve a fourth order  partial differential equation with the given condition,
which can be converted to two second order partial differential equations and solved by
discrete cosine transform (DCT) efficiently \cite{Strange1999DCT}.


The rest of this paper is organized as follows.
Theoretical analysis about convex shape representation by SDF is presented in Section \ref{sec2}.
Probability-based models with convexity prior and  two probability estimation methods are given in  Section \ref{sec3}.
Section \ref{sec4} is devoted to numerical algorithms for the proposed models. Numerical results for various images are illustrated in Section \ref{sec5}.
Lastly, some conclusions and future work are discussed in Section \ref{sec6}.

\section{Convex shape prior with signed distance functions}\label{sec2}
Level set representation method \cite{Osher1988Fronts} is  one of the most
popular tools in the field of image segmentation  \cite{Chan2001},\cite{Chan2005Level},\cite{Tai2017simple}
because it can handle the topological changes of curves and surfaces efficiently.
It is well-known that any object in the plane corresponds
a unique signed distance function (SDF) --
a special level set function \cite{Osher1988Fronts}, and vice versa.
Therefore, the properties of the object can be described by its
signed distance function. In \cite{Yan2018},
it is proved that the convexity of an object is guaranteed by the convexity of its corresponding SDF, which is equivalent to nonnegativity of the Laplacian of SDF.
We will prove a further conclusion by geometric method in this section.

The signed distance function of an object $\Omega_0$ is defined as
\begin{eqnarray}
\phi(x)=\left\{\begin{array}{ll}
-dist(x,C)&x~~\text{inside}~~ C\\
dist(x,C)&x~~\text{outside}~~C
\end{array}
\right.,
\end{eqnarray}
where $C=\partial\Omega_0$ is the boundary of $\Omega_0$
and $dist(x,C)=\min_{y\in C}\|x-y\|_2$.
It is well known that
\begin{eqnarray}
&&|\nabla\phi|=1,
\end{eqnarray}
holds almost everywhere for the SDF of any object (or curve).

The level set and sublevel set of a given function $f$ are defined as
\begin{eqnarray}
lev_f^c&=&\{x|f(x)=c\},\\
slev_f^c&=&\{x|f(x)\leq c\}.
\end{eqnarray}
It is obvious that $C=lev_\phi^0$, and $\Omega_0=slev_{\phi}^0$. The following theorem was proven in \cite{Yan2018}.
\begin{thm}\label{ThYan}
Let $\phi$ be the signed distance function of an object $\Omega_0\subset \Omega$. If $\phi\in C^2$ a.e. in $\Omega$ and satisfies the following condition:
\begin{eqnarray}\label{eq:tai1}
\triangle \phi\geq0,~~ a.e.~~ x\in \Omega.
\end{eqnarray}
Then object $\Omega_0$ must be convex.
\end{thm}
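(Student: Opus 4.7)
The plan is to exploit the fact that, because $\phi$ is a signed distance function, $|\nabla\phi|=1$ almost everywhere, which forces the Laplacian to coincide with the curvature of the level sets. From this identification the convexity of $\Omega_0$ will follow by classical planar curve theory.

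First, I would establish the geometric meaning of $\Delta\phi$. Starting from the identity $|\nabla\phi|=1$ a.e., I would write
\begin{equation*}
\Delta\phi \;=\; \nabla\cdot\nabla\phi \;=\; \nabla\cdot\frac{\nabla\phi}{|\nabla\phi|}\;=\;\kappa,
\end{equation*}
where $\kappa$ denotes the signed curvature of the level set $lev_\phi^c$ passing through $x$, oriented by the unit normal $\nabla\phi$ (which points from inside $\Omega_0$ toward outside because of the sign convention in the definition of $\phi$). The hypothesis $\Delta\phi\geq 0$ therefore translates into the geometric statement that every level curve $lev_\phi^c$ has nonnegative curvature with respect to the outward normal, i.e.\ it bulges away from the region $slev_\phi^c$ in a consistent sense.

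Second, I would apply the classical fact that a simple closed planar curve whose signed curvature does not change sign must be a convex curve, and that the compact region it bounds is a convex set. Applied to $c=0$, the boundary $\partial\Omega_0=lev_\phi^0$ is such a closed curve (it is the zero set of a proper SDF), hence $\Omega_0=slev_\phi^0$ is convex. This also justifies the stronger ``sublevel set'' statement flagged in the introduction: the same argument applied to each $c$ shows every $slev_\phi^c$ is convex, so $\Omega_0$ is not only convex but sits inside a nested family of convex sublevel sets of $\phi$.

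The subtle points I would have to handle carefully are the regularity caveats hidden in the ``a.e.'' clause. The SDF of even a smooth convex body is only Lipschitz globally: its medial axis (skeleton) inside $\Omega_0$ is a set where $\phi$ fails to be $C^2$ and where $|\nabla\phi|=1$ fails. On that exceptional set the curvature identification breaks down, so I would need to argue that the curvature inequality holds on an open dense neighborhood of $\partial\Omega_0$ (where $\phi$ is genuinely $C^2$ because the nearest-point projection to $\partial\Omega_0$ is single-valued near the boundary) and that this is sufficient to conclude convexity of the bounding curve. I expect this boundary-regularity issue, together with ruling out $\partial\Omega_0$ having multiple components or self-intersections from the level-set viewpoint, to be the main technical obstacle; the curvature-sign to convex-curve step itself is a standard consequence of the Gauss–Bonnet / winding-number argument for simple closed curves in the plane.
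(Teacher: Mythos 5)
Your argument---identifying $\Delta\phi$ with the signed curvature of the level curves through $\kappa=\nabla\cdot\left(\nabla\phi/|\nabla\phi|\right)=\Delta\phi$ using $|\nabla\phi|=1$, and then invoking the classical fact that a simple closed plane curve whose curvature does not change sign bounds a convex region, applied at $c=0$---is essentially the same route the paper takes (Theorem~1 is stated with a citation to Yan et al., and the paper's own proof of the equivalence in Theorem~\ref{LuoTh} rests on exactly this curvature identity plus ``nonnegative curvature iff convex curve''). Your proposal is correct at the paper's level of rigor, and is in fact more explicit than the paper about the remaining a.e./medial-axis and multiple-component caveats, which the paper passes over silently.
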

In Theorem \ref{LuoTh}, we will prove that the convexity of shape is equivalent to
 Laplacian nonnegativity of its corresponding SDF, i.e. condition (\ref{eq:tai1}) is sufficient and also necessary for the convexity of the object $\Omega_0$.
 We prove the following Lemma  first. %

\begin{lem}\label{LuoLM}
Let $\phi$ be the signed distance function of $\Omega_0 \subset\mathbb{R}^2$,
and $\Omega_c$ is the sublevel set of $\phi$ with a given number $c$.
Then we have that $\Omega_0$ is convex if and only if $\Omega_c$ is convex for all $c$.
\end{lem}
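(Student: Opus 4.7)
The plan is to split the equivalence into an easy direction and a content-bearing one. Since $\Omega_0 = slev_\phi^0$ by definition, the ``sublevel sets convex for all $c$'' implication trivially gives convexity of $\Omega_0$ by specializing to $c=0$. So the real work is the forward direction: assuming $\Omega_0$ is convex, show that every sublevel set $\Omega_c$ is convex. I would handle this by reinterpreting $\Omega_c$ geometrically according to the sign of $c$, and then reduce to classical facts about parallel sets of convex bodies.

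For $c>0$, the SDF definition gives $\Omega_c = \Omega_0 \cup \{x \notin \Omega_0 : dist(x,C) \leq c\}$, which I would rewrite uniformly as the outer parallel body $\Omega_c = \{x : dist(x,\Omega_0) \leq c\} = \Omega_0 \oplus \overline{B(0,c)}$. Convexity is then proved directly: given $x_1,x_2\in \Omega_c$, pick $y_i\in \Omega_0$ with $\|x_i-y_i\| \leq c$, let $y_t=(1-t)y_1+ty_2$ (which lies in $\Omega_0$ by its convexity), and use the triangle inequality
\begin{equation*}
\|(1-t)x_1+tx_2 - y_t\| \leq (1-t)\|x_1-y_1\| + t\|x_2-y_2\| \leq c,
\end{equation*}
to conclude $(1-t)x_1+tx_2 \in \Omega_c$.

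For $c<0$, I would identify $\Omega_c$ with the inner parallel body (erosion) $\{x \in \Omega_0 : dist(x,C) \geq |c|\}$, which equals $\{x : \overline{B(x,|c|)} \subseteq \Omega_0\}$. The convexity of this set follows because if $\overline{B(x_i,|c|)} \subseteq \Omega_0$ for $i=1,2$, then for any $t\in[0,1]$ the Minkowski average
\begin{equation*}
\overline{B((1-t)x_1+tx_2,\,|c|)} = (1-t)\overline{B(x_1,|c|)} + t\overline{B(x_2,|c|)}
\end{equation*}
is contained in $(1-t)\Omega_0 + t\Omega_0 \subseteq \Omega_0$ by the convexity of $\Omega_0$, so $(1-t)x_1+tx_2 \in \Omega_c$. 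The case $c=0$ is immediate.

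The argument is essentially two applications of the standard ``Minkowski sum / Minkowski difference of a convex set with a disk is convex.'' I expect the only delicate step is bookkeeping the geometric description of $\Omega_c$ across the sign change of $c$, i.e., checking that $slev_\phi^c$ really coincides with the outer (resp. inner) parallel body of $\Omega_0$ at distance $|c|$ when $c>0$ (resp. $c<0$). Once this identification is written out cleanly from the definitions $\phi = \pm dist(\cdot,C)$, the convexity inheritance arguments above close the lemma.
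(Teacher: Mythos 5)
Your argument is correct, and it diverges from the paper's proof only in the case $c<0$. For $c>0$ the paper does essentially what you do: it distinguishes the sub-cases (both points in $\Omega_0$, one in and one out, both out), picks nearest boundary points $y_j\in C$ with $\phi(x_j)=\|x_j-y_j\|_2$, places $\lambda y_1+(1-\lambda)y_2$ in $slev_\phi^0$ by convexity, and finishes with the triangle inequality and the identity $\phi(x)=\min_{y\in slev_\phi^0}\|x-y\|_2$ for $x$ outside $\Omega_0$; your outer-parallel-body reading $\Omega_c=\{x:\,\mathrm{dist}(x,\Omega_0)\leq c\}$ merely collapses those sub-cases into one computation (take $y_i=x_i$ when $x_i\in\Omega_0$), a streamlining rather than a new idea. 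For $c<0$, however, the paper argues by contradiction: it assumes a point of the segment lies outside $slev_\phi^c$, translates the entire segment by the vector carrying that point to a nearest boundary point, observes that the translated endpoints stay interior to $\Omega_0$ because their distances to $C$ exceed the translation length $d<-c$, and contradicts convexity of $\Omega_0$ since the translated segment would then be interior while one of its points sits on $C$. Your direct route via the erosion description $\Omega_c=\{x:\overline{B(x,|c|)}\subseteq\Omega_0\}$ and the Minkowski identity $\overline{B((1-t)x_1+tx_2,|c|)}=(1-t)\overline{B(x_1,|c|)}+t\overline{B(x_2,|c|)}\subseteq\Omega_0$ is cleaner and avoids the translation bookkeeping; its only cost is the identification you flag yourself, namely that $slev_\phi^c$ equals this erosion, which is routine once one uses that $\Omega_0$ is closed with $C=\partial\Omega_0$ (a segment from an interior point at distance at least $|c|$ from $C$ to any point of the complement must cross $C$, and a boundary point within distance $|c|$ of $x$ forces the ball to meet the complement). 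The trivial direction via $c=0$ is the same in both proofs, so your proposal stands as a valid, slightly tidier alternative.
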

\begin{proof}
We prove this conclusion for nonempty $slev_\phi^c$ only.
It is obvious that $\Omega_0$ is convex if $\Omega_c$ are convex for all $c$.
We will prove that $\Omega_c$ are convex if $\Omega_0$ is convex in the following.
\\
(\romannumeral1)~For $c=0$, it is obvious.\\
(\romannumeral2) For $c>0$, suppose any two points $x_1,x_2\in slev_{\phi}^c$, i.e.
$dist(x_j)\leq c~(j=1,2)$. We will prove that $\lambda{x_1}+(1-\lambda)x_2 \in slev_{\phi}^c$ for $0\leq\lambda\leq1$, i.e.
\begin{eqnarray}
\phi(\lambda{x_1}+(1-\lambda)x_2)\leq c.
\end{eqnarray}
There are three cases to be considered: A) $x_1,x_2\in\Omega_0$; B) only one point belonging to $\Omega_0$; C) $x_1,x_2$ outside $\Omega_0$.
\\
A) It is obvious that  $\lambda{x}_1+(1-\lambda)x_2\in\Omega_0$ for $\lambda\in[0,1]$ by the assumption that $\Omega_0$ is convex. Therefore, we have
\begin{eqnarray}
\phi(\lambda{x_1}+(1-\lambda)x_2)\leq0<c.
\end{eqnarray}
B) Without loss of generality, we assume $x_1\in\Omega_0$ and $x_2\not\in\Omega_0$.
Suppose $\hat{\lambda}\in [0,1]$ such that $\hat{x}=\hat{\lambda}x_1+(1-\hat{\lambda})x_2\in C=\partial\Omega_0$.
For $\lambda\in[\hat{\lambda},1]$, we have $\lambda{x_1}+(1-\lambda)x_2\in \Omega_0$, and
\begin{eqnarray}
\phi(\lambda{x_1}+(1-\lambda)x_2)\leq0<c.
\end{eqnarray}
For $\tilde{\lambda}\in[0,\hat{\lambda})$, we have $\tilde{x}=\tilde{\lambda}{x_1}+(1-\tilde{\lambda})x_2=\hat{x}+(\hat{\lambda}-\tilde{\lambda})(x_2-x_1)$ outside $\Omega_0$.
Because $x_1,x_2$ and $\hat{x}$ are collinear, there is $0\leq\mu\in(0,1]$ such that
$\tilde{x}=\hat{x}+\mu(x_2-\hat{x})$, and $\tilde{x}=x_2$ when $\mu=1$.
Suppose $y_2\in\partial\Omega_0$ such that
\begin{eqnarray}
\phi(x_2)=\|x_2-y_2\|_2.
\end{eqnarray}
By the assumption, we have $\tilde{y}=\mu y_2+(1-\mu)\hat{x}\in slev_\phi^0$.
It is obvious that for any $x$ outside $\Omega_0$ we have
\begin{eqnarray}
\phi(x)=\min_{y\in C}\|x-y\|_2
=\min_{y\in slev_\phi^0}\|x-y\|_2. \label{eq:dist1}
\end{eqnarray}
Therefore, we have
\begin{eqnarray}
\phi(\tilde{x})&=&\min_{y\in slev_\phi^0}\|\tilde{x}-y\|_2\notag\\
&\leq&\|\tilde{x}-\tilde{y}\|_2\nonumber\\
&=&\|\mu(x_2-y_2)\|_2\notag\\
&\leq&\|x_2-y_2\|_2\nonumber\\
&=&\phi(x_2)=c.
\end{eqnarray}
The second inequality held by $\mu\in(0,1]$.
\\
C)
Let $y_{j}~(j=1,2)$ be the points on $C=\partial\Omega_0$ such that $\phi(x_j)=\|x_j-y_j\|_2~(j=1,2)$.
Because $slev_\phi^0$ is convex, $\lambda{y_1}+(1-\lambda)y_2\in slev_\phi^0$ for $\forall~ \lambda\in[0,1]$.
By (\ref{eq:dist1}), we have
\begin{eqnarray}
&&\phi(\lambda{x_1}+(1-\lambda)x_2)\nonumber\\
&=&\min_{y\in slev_\phi^0 }\|\lambda{x_1}+(1-\lambda)x_2-y\|_2\notag\\
&\leq& \|\lambda{x_1}+(1-\lambda)x_2-\lambda{y_1}-(1-\lambda)y_2\|_2\notag\\
&\leq&\lambda\|x_1-y_1\|_2+(1-\lambda)\|x_2-y_2\|_2\notag\\
&\leq&c.
\end{eqnarray}
(\romannumeral3) For $c<0$, suppose $x_j~(j=1,2)$ in $slev_{\phi}^c$, and $y_j\in C~(j=1,2)$ such that
\begin{eqnarray}
\phi(x_j)&=&-\min_{y\in C}\|x_j-y\|_2\nonumber\\
&=&-\|x_j-y_j\|_2\leq c,~j=1,2.
\end{eqnarray}
It is obvious that $\|x_j-y_j\|_2\geq -c~(j=1,2)$.
We have $\lambda{x}_1+(1-\lambda)x_2\in slev_{\phi}^c$ for all $\lambda\in[0,1]$, i.e.
\begin{eqnarray}
\phi(\lambda{x_1}+(1-\lambda)x_2)&=&-\min_{y\in{C}}
\|\lambda{x_1}+(1-\lambda)x_2-y\|_2\nonumber\\
&\leq& c,~~\forall~ \lambda\in [0,1].
\end{eqnarray}
In fact, if there is a number $\hat{\lambda}\in [0,1]$ such that $\hat{x}=\hat{\lambda}{x_1}+(1-\hat{\lambda})x_2\not\in slev_\phi^c$, i.e.  $\phi(\hat{x})=\phi(\hat{\lambda}{x_1}+(1-\hat{\lambda})x_2)>c$, there is a point $\hat{y}$ on $C$ such that
\begin{eqnarray}
d=\|\hat{\lambda}{x_1}+(1-\hat{\lambda})x_2-\hat{y}\|_2<-c.
\end{eqnarray}
Let us move the line segment $\{\lambda{x_1}+(1-\lambda)x_2|0\leq\lambda\leq1\}$ $d$ parallel  such that $\mathcal{M}(\hat{\lambda}{x_1}+(1-\hat{\lambda})x_2)=\hat{y}$, where $\mathcal{M}$ denotes the moving operator
\begin{eqnarray}
\mathcal{M}(x)=x+(\hat{y}-\hat{x}).
\end{eqnarray}
Therefore, for any two points $z_1,z_2$ and $\alpha\in\mathbb{R}$,  we have
\begin{eqnarray}
\mathcal{M}(\alpha z_1 +(1-\alpha) z_2)=\alpha \mathcal{M}(z_1)+(1-\alpha)\mathcal{M}(z_2).
\end{eqnarray}
Because the distances between $x_j~(j=1,2)$ and $C$
are $-\phi(x_j)\geq-c>d$, $\mathcal{M}(x_j)~(j=1,2)$ are inside $C$ still,
i.e. $\mathcal{M}(x_i)~(j=1,2)$ are interior points of  $slev_\phi^0$.
Therefore,  for all $\lambda\in[0,1]$ we have    $\mathcal{M}(\lambda{x_1}+(1-\lambda)x_2)=\lambda\mathcal{M}(x_1)+(1-\lambda)\mathcal{M}(x_2)$ are interior points of $slev_\phi^0$ due to the convexity of
$slev_\phi^0$, which contradicts to $\mathcal{M}(\hat{\lambda}{x_1}+(1-\hat{\lambda})x_2)=\hat{y}$ on $C$.
\end{proof}
By the conclusion of Lemma \ref{LuoLM}, all the level set curves of the SDF of a convex object are convex.
It is well known that
the convexity of curve is equivalent to the nonnegativity of its curvature.
Therefore, we have the following theorem.
\begin{thm}\label{LuoTh}
Let $\phi$ be a signed distance function of $\Omega_0$.
If $\phi\in C^2$ almost everywhere (a.e.), we have the convexity of $\Omega_0$ is equivalent to $\triangle \phi\geq0$ almost everywhere.
\end{thm}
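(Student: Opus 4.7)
The plan is to reduce the theorem to the combination of Theorem \ref{ThYan} (for the sufficient direction) and Lemma \ref{LuoLM} (for the necessary direction), with a short differential-geometric computation gluing Lemma \ref{LuoLM} to the Laplacian condition.

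For the direction $\triangle\phi\geq 0 \Rightarrow \Omega_0$ convex, I would simply cite Theorem \ref{ThYan}, since it is exactly that statement (under the same $C^2$ a.e. hypothesis).

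For the converse direction $\Omega_0$ convex $\Rightarrow \triangle\phi\geq 0$ a.e., the plan is as follows. First, by Lemma \ref{LuoLM}, convexity of $\Omega_0$ implies that \emph{every} nonempty sublevel set $slev_\phi^c$ is convex. Fix any point $x_0$ at which $\phi$ is $C^2$ and let $c_0=\phi(x_0)$; then $x_0$ lies on the level curve $lev_\phi^{c_0}=\partial\, slev_\phi^{c_0}$, which is the boundary of a convex region and therefore a convex curve. Next, I would use the eikonal identity $|\nabla\phi|=1$, which holds a.e., together with the $C^2$ assumption, to compute the signed curvature of the level curve through $x_0$. The standard formula for the curvature of a level set of a smooth function is
\begin{equation}
\kappa \;=\; \nabla\cdot\!\left(\frac{\nabla\phi}{|\nabla\phi|}\right),
\end{equation}
and for an SDF this collapses to $\kappa=\nabla\cdot\nabla\phi=\triangle\phi$ (using $|\nabla\phi|=1$ and differentiating $|\nabla\phi|^2=1$ to kill the cross terms). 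Since the level curve bounds a convex set and $\nabla\phi$ points to the exterior of $slev_\phi^{c_0}$, the convexity of that curve is equivalent to $\kappa\geq 0$ at $x_0$. Hence $\triangle\phi(x_0)\geq 0$. As $x_0$ ranges over the full-measure set of $C^2$ points, we obtain $\triangle\phi\geq 0$ a.e.

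The main obstacle is the curvature computation and its sign convention: I need to justify carefully that for an SDF the standard mean-curvature formula $\nabla\cdot(\nabla\phi/|\nabla\phi|)$ really reduces to $\triangle\phi$, and that the sign matches the ``interior is a convex region'' convention so that convex level curves give nonnegative, rather than nonpositive, $\triangle\phi$. The a.e. caveat is the other subtle point: $\phi$ fails to be $C^2$ on the medial axis of $\Omega_0$ and on the skeleton of $\Omega^c$, but this set has Lebesgue measure zero, so it can be excluded without weakening the a.e. conclusion. All other ingredients (the eikonal identity and Lemma \ref{LuoLM}) are already in hand, so the proof is essentially a one-line geometric observation once the curvature identity is invoked.
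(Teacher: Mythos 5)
Your proposal is correct and follows essentially the same route as the paper: both rest on Lemma \ref{LuoLM} to pass from convexity of $\Omega_0$ to convexity of all (sub)level sets, and then on the level-set curvature identity $\kappa=\nabla\cdot\bigl(\nabla\phi/|\nabla\phi|\bigr)=\triangle\phi$ (via $|\nabla\phi|=1$ a.e.) together with the equivalence of curve convexity and nonnegative curvature. The only cosmetic difference is that you delegate the sufficiency direction to Theorem \ref{ThYan} while the paper runs the same curvature equivalence in both directions, and your explicit remarks on the sign convention and the measure-zero set where $\phi$ fails to be $C^2$ are, if anything, slightly more careful than the paper's argument.
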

\begin{proof}
It is known that
the curvature of the level set curve of SDF $\phi$  is
\begin{eqnarray}
\kappa=\nabla\cdot\left({\nabla\phi\over|\nabla\phi|}\right)=\triangle\phi.
\end{eqnarray}
The second equality held by the fact that $|\nabla\phi|=1$ holds almost everywhere for
signed distance function.
Using the fact that  the curvature of a curve is nonnegative if and only if
it is convex, we get the conclusion.
\end{proof}


\section{Image segmentation models with convexity shape constraint}\label{sec3}
In this section, we  present some probability-based models for two-phase image segmentation with convex shape prior.
We will  present the general probability-based model first.
{Generally speaking, the probabilities showing a point belong to foreground/background
will be  estimated by Gaussian mixture method. }
In case that we have some points or subregions of foreground and background are already labeled,
we will calculate the probabilities for each point using similarities between this point and
the labeled points.
In order to improve the performance of the GMM-based model,
we will also enhance the model by adding landmarks on the object boundary.
Numerical algorithms for the proposed models will be presented in Section \ref{sec4}.

Let $I: x\in\Omega \mapsto I(x)\in\mathbb{R}^d$~($d=1$ for grey image and $d=3$ for color image) be an image defined on a square domain $\Omega$.
Denote the object region by $\Omega_0$ and this is the region we need to identify.
A general two-phase image segmentation model with convexity shape prior can be written as
\begin{multline}
\arg\min\limits_{\Omega_0}\int_{\Omega_0} f_{I,0}(x)dx+\int_{\Omega\setminus \Omega_0}f_{I,1}(x)dx+\int_{\partial\Omega_0}g(C(s))ds, \\
\text{subject to}~~\Omega_0~~\text{ being convex},
\label{moddom}
\end{multline}
where $f_{I,i}(x)~(i=0,1)$ are  the probability-related
functions and $C(s)=\partial\Omega_0$ is the object boundary, $g$ is usually an edge detection function.
This model is often called the 'Potts model' and we are adding the convex shape prior here.

As was done in \cite{Lie2006}\cite{Glowinski2016Some}, we  use binary label function
to represent the object.  Let $u(x)=0$ for $x\in\Omega_0$ and $1$ otherwise. The model
(\ref{moddom}) can be rewritten as
\begin{eqnarray}
\begin{array}{l}
\arg\min_{u\in\{0,1\}}\int_\Omega f_I(x)u(x)dx
+\int_\Omega g(x)|\nabla u|dx\\
\text{subject to }~~ \Omega_0 = \{x|u(x)=0\}~~\text{being convex},
\end{array} \label{modu}
\end{eqnarray}
where
$f_I(x)=f_{I,1}(x)-f_{I,0}(x)$ is called region force term. The edge detector $g$ is computed by
\begin{eqnarray}
g(x)={\alpha\over1+\beta|\nabla G\ast{I}(x)|},\label{eqG}
\end{eqnarray}
where $\alpha,\beta>0$ are two parameters, and $G$ is a smoothing function (Gaussian kernel function for example) to
suppress the noisy effect.

The region force term  plays an important role in the performance of image segmentation model (\ref{modu}).
Probability based method is one of the widely investigated approaches in the literature for image segmentation and data clustering due to its flexibility and
robustness of intensities \cite{Ashburner2005Unified},\cite{Yin2017Effective},\cite{Wei2017New},
\cite{Khadeeja2018Auto},\cite{Xiong2016},\cite{Marco2008}.

For a given image $I(x)$, suppose the probabilities of point $x$ belonging to the
two phases are $p_{I,i}(x)(i=0,1)$, respectively.
It is obvious $p_{I,0}(x)+p_{I,1}(x)=1$.
Then binary indicator function  $u(x)$  for the object $\Omega_0$ is a random variable:
\begin{eqnarray}
p\{u(x)|I(x)\}&=&\left\{\begin{array}{ll}
p_{I,0}(x)&u(x)=0\\
p_{I,1}(x)&u(x)=1
\end{array}\right.\notag\\
&=&[p_{I,1}(x)]^{u(x)}[1-p_{I,1}(x)]^{1-u(x)},
\end{eqnarray}
where $p_{I,0}(x)=1-p_{I,1}(x)$.
Assuming $u(x)$ at all $x\in\Omega$ are independent, we have
\begin{eqnarray}
p\{u|I\}=\prod_{x\in\Omega}p\{u(x)|I(x)\}.
\end{eqnarray}
The region force term is usually computed by the negative likelihood function of $p\{u(x)|I(x)\}$,
\begin{eqnarray}
-\ln(p\{u(x)|I(x)\})=u(x)[-\ln(p_{I,1}(x))+\ln(1-p_{I,1}(x))],\nonumber
\end{eqnarray}
i.e. $f_I(x)=-\ln(p_{I,1}(x))+\ln(1-p_{I,1}(x))$.
In practice, in order to fit the boundary precisely, we modify the region force term as weighted summation as
\begin{eqnarray}
f_I(x)=-w_1\ln(p_{I,1}(x))+w_0\ln(1-p_{I,1}(x))].\label{eqGRF}
\end{eqnarray}

Suppose $\phi$ is the signed distance function of $\Omega_0$. We have
\begin{eqnarray}
u(x)=H(\phi(x))=\left\{\begin{array}{ll}
1&\phi(x)>0\\
0&\phi(x)\leq0
\end{array}
\right.,
\end{eqnarray}
where $H(s)$ is the Heaviside function, i.e. $H(s)=1$ for $s>0$ and $H(s)=0$ for $s\leq0$.
By Theorem \ref{LuoTh} in Section \ref{sec2} and the following equality
\[|\nabla u| = \delta (\phi) |\nabla \phi | = \delta(\phi) ,\]
where $\delta(\phi)=H^\prime(\phi)$ is Dirac distribution function,
the model (\ref{modu}) can be
re-written as
\begin{eqnarray}
\arg\min_\phi\int_\Omega F(\phi)dx, ~~\triangle\phi\geq0,|\nabla\phi|=1,\label{modconv}
\end{eqnarray}
where $
F(\phi)=g(x)\delta(\phi)+f_I(x)H(\phi)$.
Both constraints in (\ref{modconv}) make the solution to be convex
signed distance function.
The constraint $|\nabla\phi|=1$ is to guarantee that $\phi$
is a signed distance function, and the constraint $\triangle\phi\geq0$
makes $\phi$  to be convex by Theorem \ref{LuoTh}.

There are a lot of methods to estimate the probabilities for the model above.
In order to highlight the proposed method for convex shape representation and numerical algorithm,
we use two simple methods do estimate the probabilities.
\subsection{Gaussian mixture method}
In this subsection, we will present the Gaussian mixture method (GMM) to
estimate the probabilities $p_{I,i}(i=0,1)$ mentioned above for a given image $I$.
Assume $I(x)$ obeys
mixed  Gaussian distributions
$G(\mu_0,\Sigma_0)$ and $G(\mu_1,\Sigma_1)$, i.e.
\begin{eqnarray}
p(I(x))=c_0p_0(I(x))+c_1p_1(I(x)),
\end{eqnarray}
where $c_i~(i=0,1)$ are the proportions $(c_0+c_1=1)$ of
two distributions, and
\[p_i(I(x))={1\over(2\pi)^{d\over2}\det(\Sigma_i)^{1\over2}}
e^{-{1\over2}\|I(x)-\mu_i)\|_{\Sigma_i^{-1}}^2
},i=0,1.\]
We have
\[
p\{u(x)\}=\left\{\begin{array}{ll}
     c_0& u(x)=0 \\
     c_1& u(x)=1
\end{array}
\right.
,\]
and  $c_1={{1\over|\Omega|}\int_{\Omega}u(x)dx},~c_0=1-c_1$, where $|\Omega|=\int_{\Omega}1dx$.
By Bayesian method, we have the probability
of  point $x$ belonging to the $1$st class $u(x)=1$ is
\begin{eqnarray}
p_{I,1}(x)&=&\{u(x)=1|I(x)\}\nonumber\\
&=&{p\{I(x)|u(x)=1\}p\{u(x)=1\}\over p(I(x))}\nonumber\\
&=&{c_1p_1(I(x))\over{c_0p_0(I(x))+c_1p_1(I(x))}}, \label{GMM}
\end{eqnarray}
and $p_{I,0}(x)=1-p_{I,1}(x)$ for zeroth class $u(x)=0$. Therefore, we can
obtain the region force term by (\ref{eqGRF})
\begin{eqnarray}\label{eq:tai2}
f_{I}(x)=-w_1\ln{p_{I,1}(x)}+w_0\ln(1-p_{I,1}(x)).\label{eq:RegGMM}
\end{eqnarray}
Therefore, the GMM-based model is to minimize the following energy functional:
\begin{eqnarray}
\arg\min_{\phi,c_i,\mu_i,\Sigma_i,i=0,1}
\int_{\Omega}F(\phi)dx,|\nabla\phi|=1,\triangle\phi\geq0,\label{modGMM}
\end{eqnarray}
where $F(\phi)=H(\phi)f_{I}+g(x)\delta(\phi)$ and $f_{I}$ computed by (\ref{eq:RegGMM}).

We can get segmentation result by iteration method.
Suppose we have a
segmentation result $\phi$ (initial one is given by user). Similar to expectation maximization (EM) method \cite{McLachlan2007EM}, we can
estimate the probabilities $p_{I,i}~(i=0,1)$.
The binary function  $H(\phi(x))$ can be viewed as the probability of the point $x$ belonging to the $1$st class.
Firstly, we can estimate the parameters
$c_i,\mu_i$ and $\Sigma_i~(i=0,1)$:
\begin{eqnarray}
c_i&=&{1\over|\Omega|}\int_\Omega{q_{i}(x)}dx,\label{wUp}\\
\mu_i&=&{\int_\Omega{q_{i}(x)I(x)}dx\over\int_\Omega q_{i}(x)dx},
\label{muUp}\\
\Sigma_i&=&{\int_\Omega{q_{i}(x)(I(x)-\mu_i)^T(I(x)-\mu_i)}dx
\over\int_\Omega q_{i}(x)dx}\label{sigmaUp},
\end{eqnarray}
where $q_{1}(x)=H(\phi),q_{0}(x)=1-q_{1}(x)$.
After having the parameters above, we can obtain two Gaussian
distributions $G(\mu_i,\Sigma_i)~(i=0,1)$, and compute the posterior
probabilities $p_{I,i}=p\{u=i|I(x)\}~(i=0,1)$ by (\ref{GMM})
and region force term for image segmentation by (\ref{eq:RegGMM}).
A new level set function $\phi$ can be obtained by minimizing
 (\ref{modGMM}) with region force term fixed.
\subsection{Gaussian mixture method with boundary landmarks}
In order to improve the performance of GMM-based model, we
incorporate object boundary priors into the model (\ref{modGMM}).
For nonuniform, complex and low contrast images, it is difficult to
extract the object boundary robustly and precisely.
 Although it is hard and time consuming to draw
 the whole object boundary manually, it is often very easy to
use  prior knowledge to determine some points on the object boundary. In the following, we will present a model with object boundary landmarks and convex shape prior.

Let us assume that the object boundary must pass through
points $x_k~(k=1,2,\cdots,K)$. With the level set representation, this is true if and only
if  $\phi(x_k)=0,\ k=1,2\cdots K$. Correspondingly, the GMM model (\ref{modGMM}) with boundary landmarks is to solve the following constrained minimization problem:
\begin{eqnarray}
\begin{array}{l}
\arg\min\limits_{\phi,c_i,\mu_i,\Sigma_i,i=0,1}\int_{\Omega}F(\phi)dx, \triangle\phi\geq0, |\nabla\phi|=1, \\
~~~~~~~~~~~~~~~~~~~~~~~~~~~~\phi(x_k)=0,  k=1,2,\cdots,K.
\end{array}
\end{eqnarray}
We use the penalization method to handle the constraints $\phi(x_k) =0,k=1,2,\cdots,K$ and relax the above minimization problem as
\begin{eqnarray}
\arg\!\min_{{\phi,c_i,\mu_i,\Sigma_i,i=0,1}}\!\int_{\Omega}F_{L}(\phi)dx, \triangle\phi\geq0, |\nabla\phi|=1,\label{mod_LE}
\end{eqnarray}
where $
F_{L}(\phi)=F(\phi)+{\theta\over2}\sum_{k=1}^K |\phi(x_k)|^2
$
with $\theta$ being the penalization parameter which is normally taken as a fixed large positive number.
\subsection{Prior region-based method}
It is becoming an usual method to label subregions of foreground and background to
alleviate the difficulties for complex images segmentation .
In this subsection, we use a method based on the labelled
priors to estimate the probabilities, which are not updated in the implementation procedure.

Assume $R_{bg}$ and $R_{ob}$ are the given region priors for background and
foreground (object) of the image, i.e. $R_{bg}$ and $R_{ob}$ are
parts of background and foreground, respectively.
For any point in the image domain,
we compute the probabilities belonging to background and foreground for it by the similarities
between it and all the labelled points of  $R_{bg}$ and $R_{ob}$.
The similarity between any two points  $x,y$ is computed by their locations and
image intensity values
\begin{eqnarray}
S_I^a(x,y)=e^{-a_1\|x-y\|_2^2-a_2\|I(x)-I(y))\|_2^2},
\end{eqnarray}
where $a_1,a_2$ are two numbers to balance their location and intensity distances.
In this paper, we adopt $a_1=0.1$, $a_2=10$.
The probability belonging to background $(i=1)$ is defined as
\begin{eqnarray}
p_{I,1}(x)=p\{u(x)=1\}=
{\int_{R_{bg}}S_I^a(x,y)dy\over\int_{R_{bg}\bigcup R_{ob}}S_I^a(x,z)dz},\label{eqLprb}
\end{eqnarray}
and $p_{I,0}(x)=1-p_{I,1}(x)$ is the probability  belonging to foreground (object).
If the denominator in (\ref{eqLprb}) is less than a
threshold $\epsilon_p$ (0.01 in this paper),
we set $p_{I,0}(x)=p_{I,1}(x)=0.5$.
Furthermore, for the probabilities of the labelled points, we set
$p_{I,1}(x)=1 $ for $x\in R_{bg}$ and $p_{I,1}(x)=0$ for $x\in R_{ob}$,
and vice versa.
After computing the probabilities $p_{I,i}~(i=0,1)$,
the region force term for the image segmentation model (\ref{modconv}) is then taken as
\begin{eqnarray}
f_I(x)=-w_1\ln(p_{I,1}(x))+w_0\ln(1-p_{I,1}(x)).
\end{eqnarray}

At last, we want to say that  general
 image segmentation models can be got by taking away the non-negative constraint
for the SDF Laplacian.  For the sake of simplicity, we name the
GMM-based model, GMM-based model with landmarks and region priors based model with convexity prior as
GMMC, GMMLC and RPC,  and name the
corresponding models without convex shape prior as GMM, GMML and RP.
We will compare the results of the models with convexity prior and these
without convexity prior in Section \ref{sec5}.


\section{Algorithms for the proposed models}\label{sec4}
It is possible to design efficient algorithms for solving (\ref{modconv}) and (\ref{mod_LE}).
{
We concentrate on the method for $\phi$ update by fixing other variables,
and the parameters $c_i,\mu_i$ and $\Sigma_i$ are easy to estimate with given $\phi$.
}In this work, we consider a special case, but this special case
is general enough to handle most of the cases we encounter in practice. We shall assume that the segmented object is inside a domain $\Omega_1 \subset \Omega$. In case that our segmented object touches the boundary $\partial \Omega$, then it is possible to pad extra pixels around the image domain $\Omega$ and still use the algorithm.

In case that we know the segmented object is inside $\Omega_1 \subset \Omega$, we will only require that $\phi$ is
a signed distance function and satisfies $\Delta\phi  \ge 0$ inside $\Omega_1$, i.e.
\begin{equation}\label{eq:new1}  |\nabla \phi | = 1, \quad \Delta \phi \ge 0, \mbox{  in   } \Omega_1 . \end{equation}
In the rest of the domain $\Omega_2 = \Omega\backslash\Omega_1$, we will impose no constraint,
but require the function $\phi$ to satisfy the boundary condition
\begin{eqnarray}\label{eq:new2}
\frac {\partial \phi}{\partial \vec{n}} = \frac {\partial \Delta \phi}{\partial \vec{n}} = 0,  \mbox{  on } \partial \Omega .  \end{eqnarray}
Here and latter, $\vec{n}$ is the unit out normal vector of $\partial \Omega$.
In this section, we propose an efficient algorithm for minimization problems
(\ref{modconv}) and (\ref{mod_LE}) by using splitting technique properly.
%

By introducing two auxiliary variables $\zeta$ and $\xi$, the minimization problems  (\ref{modconv}) and (\ref{mod_LE})  under conditions  (\ref{eq:new1})
and (\ref{eq:new2}) are  equivalent to:
\begin{eqnarray}
\arg\min\limits_{\phi,\zeta,\xi} \int_\Omega F(\phi)dx, ~~\zeta(x) =\triangle\phi(x), \xi(x)=\nabla\phi(x)\notag\\
~~\zeta(x)\geq0,|\xi(x)|=1,  x\in\Omega_1, \frac {\partial \phi}{\partial\vec{n}} = \frac {\partial \Delta \phi}{\partial\vec{n}} = 0,  \mbox{  on } \partial \Omega ,
\label{modconvA}
\end{eqnarray}
where the integral function $F$ may be $F$ in (\ref{modconv}) or $F_L$ in (\ref{mod_LE}).
Three functional spaces are introduce for the convenience of narration as following:
\begin{eqnarray}
V&=&\{\phi\in H^2(\Omega)| ~~ \frac {\partial \phi}{\partial\vec{n}} = \frac {\partial \Delta \phi}{\partial\vec{n}} = 0  \mbox{  on } \partial \Omega \},\notag\\
V_1&=&\{\zeta\in H^1(\Omega)| ~~ {\partial\zeta\over \partial\vec{n}}=0 ~\mbox{on}~ \partial\Omega\},\nonumber\\
V_2&=&\{\xi\in H^1(\Omega)\times H^1(\Omega)| ~~ \xi\cdot\vec{n} = 0 ~\mbox{on}~ \partial\Omega   \}.\nonumber
\end{eqnarray}
The augmented Lagrangian functional of problem (\ref{modconvA}) is
\begin{eqnarray}
\begin{array}{l}
L(\phi,\xi,\zeta,\gamma_1,\gamma_2)=\int_\Omega{F}(\phi)dx+
\langle{\gamma_1,\triangle\phi-\zeta}\rangle\\
~~~+\langle{\gamma_2,\nabla\phi-\xi}\rangle+{\rho_1\over2}\|\triangle\phi-\zeta\|_2^2
+{\rho_2\over2}\|\nabla\phi-\xi\|_2^2,\\
~~~\text{subject ~to}~\zeta(x)\geq0,|\xi(x)|=1,x\in \Omega_1,
\end{array}\label{modLg}
\end{eqnarray}
where $\phi\in V,\gamma_1,\zeta \in V_1,\gamma_2,\xi\in V_2$, and  $\rho_1,\rho_2>0$ are two parameters.
In the following, we introduce two subsets of $V_1$ and $V_2$
\begin{eqnarray}
S_{11}&=&\{\zeta\in V_1| \zeta(x)\geq0,x\in\Omega_1\},\nonumber\\
S_{21}&=&\{\xi\in V_2|~|\xi(x)|=1,x\in\Omega_1\}.\nonumber
\end{eqnarray}
It is obvious that $\zeta\in S_{11}$ and $\xi\in S_{21}$.
In the formula (\ref{modLg}) and later, we use $\langle \cdot, \cdot \rangle$ to denote the $L^2$ inner product of functions.

Alternate direction method is applied to
minimize this problem iteratively. During the iterations, one of the three variables is updated  by fixing the others.
\subsection{Alternate direction method}
The alternating direction method of multipliers (ADMM)  for (\ref{modLg}) is given in Algorithm \ref{AlgGenal}.
\begin{algorithm}
\caption{Alternate direction algorithm for (\ref{modLg})}\label{AlgGenal}
\begin{algorithmic}
\STATE1. Initialization: $\gamma_1=0,\gamma_2=0,\rho_1,\rho_2>0$, initial curve\\~~$C$ or SDF $\phi^0$ and priors (landmarks or region prior, if needed), and total iteration number  $\text{Num}>0$;
\STATE2. Compute probabilities $p_{I,0}$ $p_{I,1}$ by (\ref{GMM}) and (\ref{eqLprb}) for different methods;
\STATE3. For $t=0,1,2,\cdots,\text{Num}$
\STATE4. ~~$\zeta^{t+1}=\arg\min_{\zeta(x)\in{S}_{11}}L(\phi^{t},\xi^{t},\zeta,\gamma^{t}_1,\gamma_2^{t})$,
\STATE5. ~~$\xi^{t+1}=\arg\min_{\xi\in{S}_{21}} L(\phi^{t},\xi,\zeta^{t+1},\gamma_1^{t},\gamma_2^{t})$,
\STATE6. ~~$\phi^{t+1}=\arg\min_{\phi\in V}L(\phi,\xi^{t+1},\zeta^{t+1},\gamma_1^{t},\gamma_2^{t})$,
\STATE7. ~~$\gamma^{t+1}_1=\gamma_1^{t}+\rho_1(\triangle\phi^{t+1}-\zeta^{t+1})$,
\STATE8. ~~$\gamma^{t+1}_2=\gamma_2^{t}+\rho_2(\nabla\phi^{t+1}-\xi^{t+1})$,
\STATE9. end(for)
\end{algorithmic}
\end{algorithm}

In the following, we give the details to solve the subproblems (Step 4,5,6) in Algorithm \ref{AlgGenal}.
There are closed form solutions for Step 4 and Step 5, and the
minimizer of step 6 is the solution of a fourth order partial differential equations with given boundary conditions.
\subsubsection {$\zeta$ update in Step 4} By discarding the terms independent of $\zeta$, we have that
$\zeta^{t+1}$ is the minimizer of the following problem:
\begin{eqnarray}
\arg\min_{\zeta\in S_{11}}L(\phi^{t},\xi^{t},\zeta,\gamma_1^{t},\gamma_2^{t})
=\arg\min\limits_{\zeta\in S_{11}}\int_{\Omega}
[{\rho_1\over2}(\zeta(x))^2-
\zeta(x)(\rho_1\triangle\phi^{t}(x)+\gamma_1^{t}(x))]dx.\nonumber
\end{eqnarray}
The minimizer of the above problem is
\begin{eqnarray}
\zeta^{t+1}(x)=\left\{\begin{array}{ll}
\max\{0,\widetilde{\zeta}^{t}(x)\}&x\in\Omega_1\\
\widetilde{\zeta}^t&x\in\Omega_2,
\end{array}
\right.\label{eqUpdateZ}
\end{eqnarray}
where $\widetilde{\zeta}^{t}(x)={\triangle\phi^{t}(x)+\gamma_1^{t}(x)/\rho_1}$.
\subsubsection{$\xi$ update in Step 5} Similarly, by removing the
terms independent of $\xi$,  we have that $\xi^{t+1}$ is the minimizer of
\begin{eqnarray}
\arg\min_{\xi\in S_{21}}L(\phi^{t},\xi,\zeta^{t+1},\gamma_1^{t},\gamma_2^{t})
&=&\arg\min_{\xi\in S_{21}}\int_{\Omega}[{\rho_2\over2}|\xi|^2-\xi\cdot(\rho_2\nabla\phi^{t}+\gamma_2^{t})]dx.
\end{eqnarray}
It is obvious that
\begin{eqnarray}
\xi^{t+1}(x)=
\left\{\begin{array}{ll}
\widetilde{\xi}^t(x)/|\widetilde{\xi}^t(x)|&x\in\Omega_1\\
\widetilde{\xi}^t(x)&x\in\Omega_2,
\end{array}
\right.
\label{pUpdate}
\end{eqnarray}
where $\widetilde{{\xi}^t}(x)=\nabla\phi^{t}(x)+\gamma_2^{t}(x)/\rho_2$.
\subsubsection{ $\phi$ update in Step 6}
In order to make the iteration procedure stable in Algorithm \ref{AlgGenal}, we add a proximity term to the augmented Lagrangian functional, i.e.
\begin{eqnarray}
\arg\min_\phi{L(\phi,\xi^{t+1},\zeta^{t+1},\gamma^{t}_1,\gamma_2^{t})}+
{\rho_0\over2}\|\phi-\phi^{t}\|_2^2
&=&\arg\min\int_{\Omega}[F(\phi)+{\rho_1\over2}(\triangle\phi)^2
+{\rho_2\over2}|\nabla\phi|^2]dx\nonumber\\
&&+\langle  \nabla\phi,\gamma_2^{t}-\rho_2\xi^{t+1}\rangle
+\langle \gamma_1^{t}-\rho_1\zeta^{t+1},\triangle\phi\rangle \label{eqphiF}\\
&&+{\rho_0\over2}\|\phi-\phi^{t}\|^2_2,\phi\in~V,\nonumber
\end{eqnarray}
where $\rho_0$ is a positive number. The above equality held  by  discarding independent terms.
Because $\phi \in V, \gamma_1^t,\zeta^{t+1}\in V_1,\gamma_2^t,\xi^{t+1}\in V_2$, we have
\[
{\partial\phi\over\partial\vec{n}}\!=\!
{\partial\triangle\phi\over\partial\vec{n}}\!=\!0, ~
{\partial\zeta^{t+1}\over\partial\vec{n}}\!={\partial\gamma_1^t\over\partial\vec{n}}\!=\!0,~
\xi^{t+1}\cdot \vec{n}\!=\! \gamma_2^t\cdot\vec{n}\!=\!0.
\]
Therefore, the Euler Lagrangian equation of the objective functional (\ref{eqphiF})   is
\begin{eqnarray}
\left\{\!\begin{array}{l}
\rho_1\triangle^2\phi^{t+1}-\rho_2\triangle\phi^{t+1}+F^\prime(\!\phi^{t\!+\!1}\!)
+\rho_0\!\phi^{t\!+\!1}\!=\!\text{rhd}^t~\text{in}~\Omega,\\
{\partial\phi^{t+1}\over\partial\vec{n}}=0,{\partial\triangle\phi^{t+1}\over\partial\vec{n}}=0,~~~\text{on}~~~
\partial\Omega,
\end{array}
\right.\notag
\end{eqnarray}
where $ \text{rhd}^t=\rho_0\phi^{t}-\triangle(\gamma_1^{t}-\rho_1\zeta^{t+1})-\nabla^T(\gamma_2^{t}
-\rho_2\xi^{t+1})$, and $\nabla^T$ denotes
the conjugate operator of $\nabla$.

Because $F$ is nonlinear, we solve the above equation iteratively as following
\begin{eqnarray}
\left\{\!
\begin{array}{l}
\rho_1\!\triangle^2\!\phi^{t,j\!+1}\!-\rho_2\!\triangle\phi^{t,j\!+\!1}\!+\!\rho_0\phi^{t,j\!+\!1}
\!=\!\text{RHD}^{t}(\phi^{t,j}),\\
{\partial\phi^{t+1}\over\partial\vec{n}}=0,{\partial\triangle\phi^{t+1}\over\partial\vec{n}}=0,~~~\text{on}~~~
\partial\Omega,
\end{array}
\right.
\end{eqnarray}
where $\text{RHD}^t(\phi^{t,j})=
\text{rhd}^t-F^\prime(\phi^{t,j})$ with initial value $\phi^{t,0}=\phi^t$.
Taking  $\rho_2=2\sqrt{\rho_0\rho_1}$, we can
reduce the above 4th order problem to the solution of two 2nd order problems. It is clear that
\begin{eqnarray}
\left\{
\begin{array}{l}
(\sqrt{\rho_1}\triangle-\sqrt{\rho_0}I)^2\phi^{t,j+1}= \text{RHD}^{t}(\phi^{t,j}),\\
{\partial\phi^{t+1}\over\partial\vec{n}}=0,{\partial\triangle\phi^{t+1}\over\partial\vec{n}}=0,~~~\text{on}~~~
\partial\Omega.
\end{array}\right.\label{phiUpdate}
\end{eqnarray}
We can obtain the solution to (\ref{phiUpdate})  by solving
two Laplacian equations
\begin{eqnarray}
&&\left\{\begin{array}{lcl}
(\sqrt{\rho_1}\triangle-\sqrt{\rho_0}I)\psi^{t,j+1}&=& \text{RHD}^{t}(\phi^{t,j})\label{eqL1}\\
{\partial \psi^{t,j+1}\over \partial\vec{n}}=0,
\end{array}
\right.\label{eqLapc1}\\
&&\left\{\begin{array}{lcl}
(\sqrt{\rho_1}\triangle-\sqrt{\rho_0}I)\phi^{t,j+1}&=& \psi^{t,j+1}\label{eqL1}\\
{\partial \phi^{t,j+1}\over \partial\vec{n}}=0.
\end{array}
\right.\label{eqLapc2}
\end{eqnarray}
The idea to reduce the 4th order partial differential equations into two 2nd order partial differential equations has been used for a one dimensional periodic boundary problem in \cite{Glowinski2013On}. Due to the special nature of our segmentation model, we are able to use this idea for our problem here and it removes the difficulties to solve 4th order partial differential equation and gives very good numerical efficiency for our proposed algorithm.
These two equations (\ref{eqLapc1}) and (\ref{eqLapc2}) can be solved by fast discrete cosine transform numerically \cite{Strange1999DCT}, which will be explained in detail in next subsection.

As for the selection of $\rho_0$, we follow the guideline below.
Let $A=\rho_1\triangle^2-\rho_2\triangle$. By the iteration formula, we have
\begin{eqnarray}
(A+\rho_0I)(\phi^{t,j+1}-\phi^{t,j})=F^\prime(\phi^{t,j})-
F^\prime(\phi^{t,j-1}).
\end{eqnarray}
Therefore, we have
\begin{eqnarray}
&&\|\phi^{t,j+1}-\phi^{t,j}\|_2\nonumber\\
&=&\|(A+\rho_0I)^{-1}F^\prime(\phi^{t,j})-F^\prime(\phi^{t,j-1})\|_2\nonumber\\
&\leq&\|(A+\rho_0I)\|_2^{-1}\max_x|F^{\prime\prime}(\phi(x))|\|
\phi^{t,j}-\phi^{t,j-1}\|_2.\nonumber
\end{eqnarray}
If $\|(A+\rho_0I)\|_2^{-1}\max_x|F^{\prime\prime}(\phi)|<1$, we have that the above iteration sequence is contracted,
and the sequence $\{\phi^{t,j}\}$ converges to $\phi^{t+1}$.
Because $A=\rho_1\triangle^2-\rho_2\triangle$ is a symmetric semidefinite operator, the iteration is contracted when $\rho_0>\max_x|F^{\prime\prime}(\phi)|$. In this paper, we use $\phi^{t,1}$ as the approximation of $\phi^{t+1}$ to save computational cost.

\subsection{Numerical implementation}
In this subsection, we give the details for the numerical implementations for different models in Section \ref{sec3}.
Firstly,  we use $H_\epsilon(s)$ and $\delta_\epsilon(s)=H^\prime_\epsilon(s)$ with $\epsilon>0$ to approximate the Heaviside function and Dirac distribution function
\begin{eqnarray}
\begin{array}{rcl}
 H_\epsilon(s)&=&{1\over2}+{1\over\pi}\arctan({s/\epsilon}),\\
\delta_\epsilon(s)&=&{\epsilon\over\epsilon^2+s^2}.
\end{array}
\end{eqnarray}

In practical applications, an input digital image $I\in\mathbb{R}^{M\times N\times d}$ ($d=1$ for gray image and $d=3$ for color image) is viewed as a discrete version of a continuous
image $I(x),x\in\Omega$ with mesh size $h=1$. By abusing the notations little, we also use
$\Omega=\{(m,n)|~ m=1,2\cdots,M,n=1,2,\cdots,N\}$ to denote the image domain, and $\Omega_1\subset \Omega$ is the given domain which contains the segmented object.
After given an initial curve in the image domain, we can compute the corresponding signed distance function $\phi$ in the image domain by the fast marching method \cite{Sethian1996A} or the fast sweeping method \cite{Tsai2004Fast,Zhao2005A}.
The operators $\triangle\phi$ and $\nabla\phi$ are approximated by finite differences.

For any function $\psi \in V$, we extend the discrete function $\psi(m,n)$ by one grid point around the image domain to satisfy the boundary condition ${\partial\psi\over\partial\vec{n}}=0$ with the discretization
\begin{eqnarray}
\psi(0,n)=\psi(1,n),&\psi(M+1,n)=\psi(M,n),\\
\psi(m,0)=\psi(m,1),&\psi(m,N+1)=\psi(m,N).
\end{eqnarray}
where $1\leq n\leq N, ~1\leq m\leq M$.
Because of the extension above, the differences $\nabla_x^{+}\psi(m,n)$ and $\nabla_y^{+}\psi(m,n)$ for $1\leq m\leq M,1\leq n\leq N$ are defined as
\begin{eqnarray}
\nabla^{+}_{x}\psi(m,n)&=&\psi_(m+1,n)-\psi(m,n),\\ \nabla_{y}^+\psi(m,n)&=&\psi(m,n+1)-\psi(m,n).
\end{eqnarray}
Then we  can numerically compute $\nabla{\psi}$ by
\begin{eqnarray}
\nabla{\psi}(m,n)&=&(\nabla_{x}^+\psi(m,n),\nabla_{y}^+\psi(m,n))^T.
\end{eqnarray}
Similarly, we can approximate $\triangle\psi(m,n)$ for $1\leq m\leq M,1\leq n\leq N$ by
central differences
\begin{eqnarray}
(\triangle\psi)(m,n)=(\nabla_{x}^{2}\psi)(m,n)+(\nabla_{y}^2\psi)(m,n),
\end{eqnarray}
where
\begin{eqnarray}
(\nabla_{x}^2\psi)(m,n)
=\psi(m+1,n)-2\psi(m,n)+\psi(m-1,n),\notag\\
(\nabla_{y}^2\psi)(m,n)
=\psi(m,n+1)-2\psi(m,n)+\psi(m,n-1).\nonumber
\end{eqnarray}
By the definition of conjugate operator, we have  $\langle\nabla\psi,q\rangle=\langle \psi,\nabla^Tq\rangle$ for
$q=(q_1,q_2)^T$, and
\begin{eqnarray}
(\nabla^Tq)(m,n)&=&-(\nabla_{x}^-q_1(m,n)+\nabla_{y}^-q_2(m,n)),\nonumber
\end{eqnarray}
where for $1\leq n\leq N$,
\[
\nabla^{-}_xq_1(m,n)=\left\{
\begin{array}{ll}
q_1(m,n)-q_1(m-1,n)&1<m<M\\
q_1(1,n)&m=1\\
-q_1(M-1,n)&m=M,
\end{array}
\right.
\]
and for $1\leq m\leq M$
\[
\nabla^{-}_yq_2(m,n)=
\left\{
\begin{array}{ll}
q_2(m,n)-q_2(m,n-1)&1<n<N\\
q_2(m,1)&n=1\\
-q_2(m,N-1)&n=N.
\end{array}
\right.
\]
In the following, we mainly discuss the
numerical formulas for the update of the  functions $\zeta,\xi$ and $\phi$ in Algorithm \ref{AlgGenal}.
For the implementation of $\zeta$ update formula (\ref{eqUpdateZ}),
we have
\begin{eqnarray}
\zeta^{t+1}(m,n)=
\left\{\begin{array}{ll}
\max\{\widetilde{\zeta}^t(m,n),0\}& (m,n)\in\Omega_1\\
\widetilde{\zeta}^t(m,n)&(m,n)\in\Omega\setminus\Omega_1,
\end{array}\right.\label{ztaUpdateD}
\end{eqnarray}
where $\widetilde{\zeta}^t(m,n)={\triangle\phi^{t}(m,n)+\gamma_1^{t}(m,n)/\rho_1}$.
By (\ref{pUpdate}),  we have
\begin{eqnarray}
\xi^{t+1}({m,n})=
\left\{\begin{array}{ll}
{\widetilde{\xi}^t(m,n)/|\widetilde{\xi}^t(m,n)|}&(m,n)\in \Omega_1\\
\widetilde{\xi}^t(m,n)&(m,n)\in \Omega\setminus\Omega_1,
\end{array}\right.\label{xiUpdateD}
\end{eqnarray}
where $\widetilde{\xi}^t(m,n)=\nabla\phi^{t}(m,n)+\gamma_2^{t}(m,n)/\rho_2$.
We will discuss the DCT method for the update of $\phi$.
Firstly, we can compute $\text{rhd}^t$ numerically,
\begin{eqnarray}
\text{rhd}^{t}(m,n)&=&\rho_0\phi^t(m,n)
-\triangle(\gamma_1^{t}(m,n)-\rho_1\zeta^{t+1}(m,n))\nonumber\\
&&- \nabla^T(\gamma_2^{t}(m,n)-\rho_2\xi^{t+1}(m,n)).\notag
\end{eqnarray}

We will present the details on the application of DCT to
solve Laplacian equation with Neumann boundary condition. The definition of DCT and some
properties used in this paper are presented first for one dimension case.
For discrete signal $S(j)~(j=1,2,\cdots J)$, its DCT transform is defined as
\begin{eqnarray}
\hat{S}(k)=w(k)\sum_{j=1}^{J}S(j)\cos({\pi(2j-1)(k-1)\over2J}),
\end{eqnarray}
where $~k=1,2,\cdots,J,~w(k)=\sqrt{1\over J}$ for $k=1$ and $\sqrt{2\over J}$ otherwise. Hereafter, we use $\hat{\bullet}$ to denote the discrete cosine transform of $\bullet$.

Let $e_J(j,k)=\cos\left({\pi(2j-1)(k-1)\over2J}\right)$. We have the following relationships between $e_J(j-1,k),~e_J(j,k)$ and $e_J(j+1,k)$ by trigonometric function formulae.
Firstly, for $1<j<J$ we have
\begin{eqnarray}
e_J(j-1,k)+e_J(j+1,k)
=2\cos({\pi(k-1)\over J})e_J(j,k). \label{equality1}
\end{eqnarray}
Secondly, we have
\begin{eqnarray}
&&e_J(1,k)+e_J(2,k)\nonumber\\
&=&\cos(\frac{k-1}{2J}\pi)+\cos(\frac{3(k-1)}{2J}\pi)\nonumber\\
&=&\cos(\frac{k-1}{J}\pi-\frac{k-1}{2J}\pi)+\cos(\frac{k-1}{J}\pi+\frac{k-1}{2J}\pi)\nonumber\\
&=&2\cos(\frac{k-1}{J}\pi)\cos(\frac{k-1}{2{J}}\pi)\nonumber\\
&=&2\cos(\frac{k-1}{J}\pi)e_J(1,k).\label{equality2}
\end{eqnarray}
Similarly, we can prove that
\begin{equation}
e_J(J,k)+e_J(J-1,k)=2\cos(\frac{(k-1)\pi}{J})e_J(J,k).
\label{equality3}
\end{equation}
By the equalities (\ref{equality1}), (\ref{equality2}) and (\ref{equality3}), we can get
\begin{eqnarray}
&&\widehat{\nabla_{x}^{2}S}(k)\notag\\
&=&w(k)\sum_{j=1}^J(\nabla_{x}^{2}S)(j)e_J(j,k)\nonumber\\
&=&w(k)\sum_{j=1}^J(S(j+1)-2S(j)+S(j-1))e_J(j,k)\nonumber\\
&=&-2\hat{S}(k)+w(k)S(J)e_J(J,k)+w(k)S(1)e_J(1,k)
+w(k)\sum_{j=2}^JS(j)e_J(j-1,k)+w(k)\sum_{j=1}^{J-1}S(j)e_J(j+1,k)\nonumber\\
&=&-2\hat{S}(k)+w(k)\sum_{j=2}^{J-1}S(j)[e_J(j-1,k)+e_J(j+1,k)]
+w(k)S(J)[e_J(J,k)+e_J(J-1,k)]\notag\\
&&+w(k)S(1)[e_J(2,k)+e_J(1,k)]\notag\\
&=&-2\hat{S}(k)+2w(k)\cos({\pi(k-1)\over{J}})\sum_{j=1}^JS(j)e_J(j,k)\notag\\
&=&2(\cos({\pi(k-1)\over{J}})-1)\hat{S}(k).\label{eq1DCT}
\end{eqnarray}

For two dimensional signal $\phi(m,n),m=1,2,\cdots,M,n=1,2,\cdots,N$, its 2D DCT is defined as
\begin{eqnarray}
\hat{\phi}(k,l)=w(k)w(l)\sum_{m=1,n=1}^{M,N}
\phi(m,n)e_M(m,k)e_N(n,l),
\end{eqnarray}
where $k=1,2,\cdots,M,l=1,2\cdots,N$.
Using the similar derivation for (\ref{eq1DCT}), we can get that for $k=1,2,\cdots,M,l=1,2\cdots,N$
\begin{eqnarray}
\widehat{\nabla_{x}^{2}\phi}(k,l)=
2(\cos({\pi(k-1)\over{M}})-1)\hat{\phi}(k,l),\\
\widehat{\nabla_{y}^2\phi}(k,l)=2(\cos({\pi(l-1)\over{N}})-1)
\hat{\phi}(k,l).
\end{eqnarray}

Applying discrete cosine transform \cite{Strange1999DCT} on both sides of (\ref{phiUpdate}), we have
\begin{eqnarray}
r(k,l)\widehat{\psi}^{t,j+1}(k,l)
&=&\widehat{\text{RHD}^{t}}(\phi^{t,j})(k,l),\\
r(k,l)\hat{\phi}^{t,j+1}(k,l)
&=&\widehat{\psi}^{t,j+1}(k,l),
\end{eqnarray}
where $r(k,l)=\sqrt{\rho_0}+2\sqrt{\rho_1}\left[2-\cos\left({k-1\over M}\pi\right)-\cos\left({l-1\over N}\pi\right)\right]$ for
$1\leq k\leq M,1\leq l\leq N$.
In summary, we have
\begin{eqnarray}
[r(k,l)]^2\hat{\phi}^{t,j+1}(k,l)=
\widehat{\text{RHD}^{t}}(\phi^{t,j})(k,l) . \label{phiUpdateD}
\end{eqnarray}
We want to emphasize that $\widehat{\text{RHD}^{t}}$ are different for different models.
For the GMM-based and region-prior models, we have for all $1\leq m\leq M,1\leq n\leq N$,
\begin{eqnarray}
\text{RHD}_1^t(\phi)(m,n)=\text{rhd}^t(m,n)-F_\epsilon(\phi^t(m,n)),
\end{eqnarray}
where
\begin{eqnarray}
F_\epsilon\!(\!\phi^t(m,n)\!)\!=\!\delta_\epsilon\!(\phi^t(m,n)\!)\!f(m,n)\!+\!
\delta_\epsilon^\prime\!(\phi^t(m,n)\!)g\!(m,n\!),\notag
\end{eqnarray}
and $f(m,n)=-w_1\ln(p_{I,1}(m,n))+w_0\ln(1-p_{I,1}(m,n))$, $g(m,n)={\alpha\over1+\beta|\nabla G\ast{I(m,n)}|}$.
For the GMM-based model with boundary landmarks $x_k=(x_{k,1},x_{k,2})(k=1,2,\cdots,K)$, we have
\begin{eqnarray}
\text{RHD}_2^t(\phi)(m,n)=\text{RHD}_1^t+\theta\phi^t(m,n)\sum_{k=1}^K\delta_{x_k}(m,n)\notag
\end{eqnarray}
where $\delta_{x_k}(m,n)\!=\!1$ if $m\!=\!x_{k,1},n\!=\!x_{k,2}$ and zero otherwise.

It is obvious that $r(k,l)>0$ for all $1\leq k\leq M,1\leq l\leq N$. Dividing $[r(k,l)]^2$ on both sides, and using 2D inverse DCT, we can get $\phi^{t,j+1}$.

We will present the concrete algorithms for GMM-based models with and without landmarks and
region-prior model as following.
After having the SDF $\phi^t$ by initial curve or updated
by the iteration procedure, we can estimate the parameters as following
\begin{eqnarray}
c_i^{t}&=&{1\over{MN}}\sum_{m,n}{q_i^t}({m,n}),i=0,1\label{cUpdateD}\\
\mu^{t}_i&=&{\sum_{m,n}{q_i^t(m,n)I(m,n)}\over\sum_{m,n}{q_i^t(m,n)}},i=0,1\label{muUpdateD}\\
\Sigma_i^{t}&=&{\sum_{m,n}{q_i^t(m,n)u^t(m,n)}\over\sum_{m,n}{q_i^t(m,n)}},i=0,1\label{sigmaUpdateD},
\end{eqnarray}
where $q_1^t=H_\epsilon(\phi^t),q_0^t=1-q_1^t$, and $u^t(m,n)=(I(m,n)-\mu_i^{t})^T(I(m,n)-\mu_i^{t})$.
Having the parameters $\mu_i^t,\Sigma^t_i~(i=0,1)$, we have the probabilities
\begin{eqnarray}
p_i^{t}(m,n)={1\over(2\pi)^{d\over2}\det(\Sigma_i^{t})^{1\over2}}
\exp(v^t_i(m,n)),i=0,1,\label{ProUpdateGMM}
\end{eqnarray}
where $v_i^t(m,n)={1\over2}(I(m,n)-\mu_i^{t})^T(\Sigma_i^{t})^{-1}(I(m,n)-\mu_i^{t})$.
In the implementation, we add a diagonal matrix $D=\text{diag}(\lambda,\lambda,\lambda)$ to $\Sigma_i^t$ to avoid $\Sigma_i^t$
rank deficiency ($\lambda=0.1$ in this paper).
Finally, we can compute the probabilities $p_{I,i}(i=0,1)$ for all pixels $(m,n)~(1\leq m\leq M,~1\leq n\leq N)$
\begin{eqnarray}
p_{I,i}^{t}(m,n)
&=&{c_i^{t}p_i^{t}(m,n)
\over{c_0^tp_0^t(m,n)+c_1^{t}p_1^{t}(m,n)}},i=0,1,
\label{GMMD}
\end{eqnarray}
and obtain the new region force terms by (\ref{eq:RegGMM}) for the image GMM-based segmentation models numerically.

Based on the discussions above, we give the
algorithm for the GMM-based models with and without landmarks on the object boundary as Algorithm \ref{AlgGMM}.

\begin{algorithm}
\caption{Algorithm for the GMM-based model}\label{AlgGMM}
\begin{algorithmic}
\STATE1. Initialization: $\zeta=0\in\mathbb{R}^{M,N},
\gamma_1=0\in\mathbb{R}^{M,N}$, $\xi=0\in\mathbb{R}^{M,N}\times\mathbb{R}^{M,N},
\gamma_2=0\in\mathbb{R}^{M,N}\times\mathbb{R}^{M,N}$,
$\rho_1,\rho_0>0$, $\rho_2=\sqrt{\rho_1\rho_0}$, $Num>0$, initial curve $C$ and
landmarks $x_k(k=1,2,\cdots,K)$(if needed)
\STATE2. Compute the SDF $\phi^0$ of $C$, and estimate (initial)
probabilities $p^0_{I,i}(x),i=0,1$ by (\ref{GMMD})
\STATE3. For~ $t=0,1,2,\cdots,Num$
\STATE4. $\zeta^{t+1}$ update by (\ref{ztaUpdateD}).
\STATE5.
$\xi^{t+1}$ update by (\ref{xiUpdateD}).
\STATE6.
$\phi^{t+1}$ update by (\ref{phiUpdateD}) and DCT.
\STATE7. $\gamma_1^{t+1}(m,n)\!=\!\gamma_1^{t}(m,n)\!+\!\rho_1(\triangle\phi^{t\!+\!1}(m,n)\!-\!\zeta^{t\!+\!1}(m,n)).$
\STATE8. $\gamma_2^{t\!+\!1}(m,n)\!=\!\gamma_2^{t}(m,n)\!+\!\rho_2(\nabla\phi^{t\!+\!1}(m,n)\!-\!\xi^{t+1}(m,n)).$
\STATE9. Update $c_i^{t},\mu_i^{t}$ and $\Sigma_i^{t}(i=0,1)$ by (\ref{cUpdateD}), (\ref{muUpdateD}) and (\ref{sigmaUpdateD}), and compute new probabilities by (\ref{GMMD}).
\STATE10. End(for)
\end{algorithmic}
\end{algorithm}

\begin{remark} We want to remark that the boundary conditions for $\xi,\zeta, \gamma_i, i=1,2$ are automatically satisfied if the initial values satisfy these boundary conditions. Thus, the values for these functions are not updated for the grid points that are added around the image domain.
\end{remark}
As for the region prior method,
let $R_{ob}$ and $R_{bg}$ be the two labelled region prior sets of object and background.
The probabilities are computed by
\begin{eqnarray}
p_{I,1}(m,n)=
{
\sum_{(k,l)\in{R_{bg}}}\exp(-E(m,n,k,l))
\over
\sum_{(k,l)\in{R_{ob}\bigcup}R_{bg}}\exp(-E(m,n,k,l))}\label{CMLR}
\end{eqnarray}
and $p_{I,0}(m,n)=1-p_{I,1}(m,n)$, where \[E(m,n,k,l)\!=\!a_1(k-m)^2\!+\!a_1(l-n)^2\!+\!a_2\|I(m,n)\!-\!I(k,l)\|_2^2.\]
The numerical algorithm for the region-prior model is given in Algorithm \ref{AlgRp}.
\begin{algorithm}
\caption{Algorithm for the update of all variables}\label{AlgRp}
\begin{algorithmic}
\STATE1. Initialization: $\zeta=0\in\mathbb{R}^{M,N},
\gamma_1=0\in\mathbb{R}^{M,N}$, $\xi=0\in\mathbb{R}^{M,N}\times\mathbb{R}^{M,N},
\gamma_2=0\in\mathbb{R}^{M,N}\times\mathbb{R}^{M,N}$,
$\rho_1,\rho_0>0$, $\rho_2=\sqrt{\rho_1\rho_0}$, $Num>0$, initial curve $C$ and region priors $R_{bg}$ and $R_{ob}$;
\STATE2. Compute the SDF $\phi^0$ of $C$ , and estimate (initial)
probabilities $p^0_{I,i}(x),i=0,1$ by and (\ref{CMLR})
\STATE3. For~ $t=0,1,2,\cdots,Num$
\STATE4. $\zeta^{t+1}$ update by (\ref{ztaUpdateD})
\STATE5.
$\xi^{t+1}$ update by (\ref{xiUpdateD})
\STATE6.
$\phi^{t+1}$ update by (\ref{phiUpdateD}) and DCT
\STATE7. $\gamma_1^{t\!+\!1}(m,n)\!=\!\gamma_1^{t}(m,n)\!+\!\rho_1(\triangle\phi^{t+1}(m,n)\!-\!\zeta^{t+1}(m,n))$
\STATE8.$\gamma_2^{t\!+\!1}(m,n)\!=\!\gamma_2^{t}(m,n)\!+\!\rho_2(\nabla\phi^{t+1}(m,n)\!-\!\xi^{t+1}(m,n))$.
\STATE9. End(for)
\end{algorithmic}
\end{algorithm}

We want to emphasize that it is easy to use GMM, GMML and RP to segment shapes without convexity requirement. This can be easily done by removing the nonnegativity projection for the update of $\zeta$ in Step 4.

\section{Numerical Experiments}\label{sec5}
In this section, we will present some numerical examples to show the
efficiency of the proposed methods.
Numerous experiments by the proposed methods were conducted.
The experimental images include complex binary images,
occluded images and low contrast images. It is  challenging to segment these  images.
Part of our results are categorized and presented in following.
The results, especially on the complex real images,  show the models with convex shape prior
is superior to the models without convex shape prior.

Some parameters are kept the same in the experiments for all images.
In the computation of edge detection function $g$, $\alpha=0.1,\beta=10$,
$G={1\over16}[1,2,1;2,4,2;1,2,1]$ in (\ref{eqG}), and the gradient is approximated by the Sobel operator.
In the implementation,  $\rho_0=10$, $\rho_1=1$ and $\rho_2=2\sqrt{10}$.
In all the experiments, we set $\Omega_1=\{(m,n)|1<m<M,1<n<N\}$ for $I\in \mathbb{R}^{M\times N\times d}$
($d=1$ for gray image and $d=3$ for color image). In fact, this is the largest possible choice for $\Omega_1$ in the discrete setting.
%
\subsection{Comparisons between GMM and GMMC}
In this subsection we present some experimental results
by GMM  and GMMC models.
The experimental images include convex and nonconvex objects.
The segmentation results show the validity of  GMMC and the proposed
algorithm in keeping convexity of object.
The numerical results by GMMC and GMM are displayed in Figure
\ref{FigGMM1} and \ref{FigGMM2}.
The parameters $w_0,w_1$ are tabulated in Table \ref{TGMM} for different images.

Figure \ref{FigGMM1} illustrates the segmentation results of images with nonconvex objects.
Firstly, these results show the correctness of the
proposed algorithms for the models.
Secondly, the results by GMMC show the validity of GMMC in keeping the convexity of objects.
Although the horse, teapot, gear and Chinese fan are not convex shapes, GMMC can output meaningful convex
contours surrounding the objects under proper parameters $w_0,w_1$.
\begin{figure}[htb]
\centering
\begin{tabular}{ccc}
\includegraphics[width=3cm]{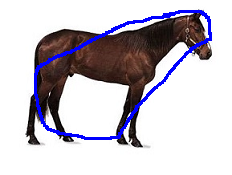}&
\includegraphics[width=2.5cm]{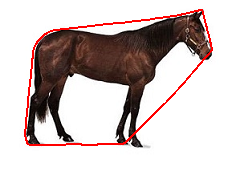}&
\includegraphics[width=2.5cm]{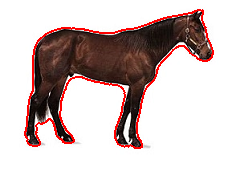}\\
\includegraphics[width=2.5cm]{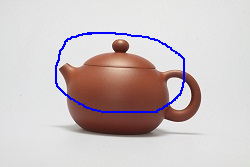}&
\includegraphics[width=2.5cm]{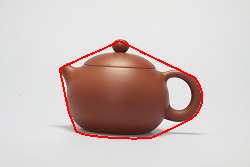}&
\includegraphics[width=2.5cm]{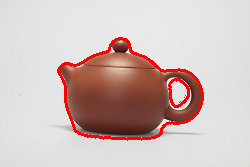}\\
\includegraphics[width=2.5cm]{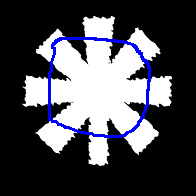}&
\includegraphics[width=2.5cm]{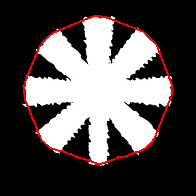}&
\includegraphics[width=2.5cm]{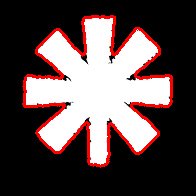}\\
\includegraphics[width=2.5cm]{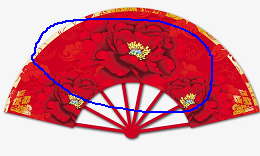}&
\includegraphics[width=2.5cm]{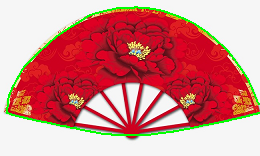}&
\includegraphics[width=2.5cm]{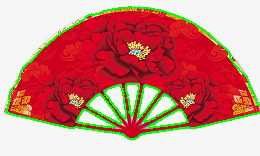}\\
\includegraphics[width=2.5cm]{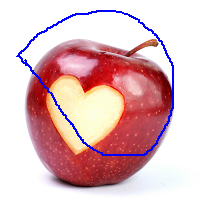}&
\includegraphics[width=2.5cm]{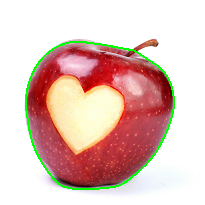}&
\includegraphics[width=2.5cm]{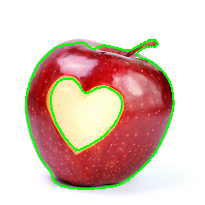}
\end{tabular}
\caption{Segmentation results by GMMC and GMM for images with nonconvex objects. First column: Original images with initial curves; Second column: Segmentation results by GMMC; Third column: Segmentation results by GMM.}\label{FigGMM1}
\end{figure}

Figure \ref{FigGMM2} illustrates some results on images with  occlusions, such as the Chinese fan, jade, bitten leaf, football and black bear.
It is more easy to identify the segmentation results by GMMC than these by
GMM for computer vision.
These results show that GMMC can touch the whole object contour accurately, while  GMM  fails to get meaningful segmentations (see the bear image for example).

\begin{figure}[htb]
\centering
\begin{tabular}{ccc}
\includegraphics[width=2.5cm]{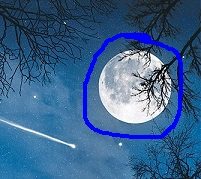}&
\includegraphics[width=2.5cm]{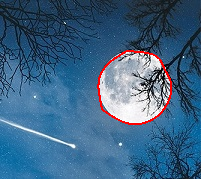}&
\includegraphics[width=2.5cm]{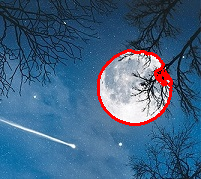}\\
\includegraphics[width=2.5cm]{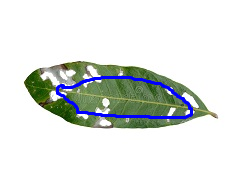}&
\includegraphics[width=2.5cm]{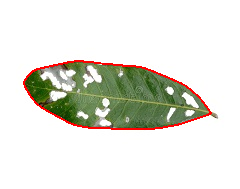}&
\includegraphics[width=2.5cm]{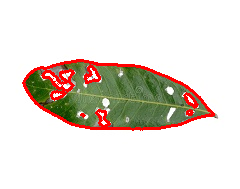}\\
\includegraphics[width=2.5cm]{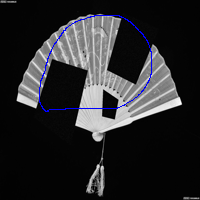}&
\includegraphics[width=2.5cm]{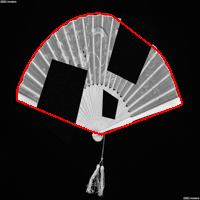}&
\includegraphics[width=2.5cm]{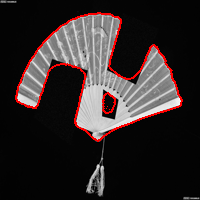}\\
\includegraphics[width=2.5cm]{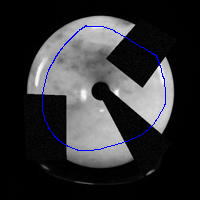}&
\includegraphics[width=2.5cm]{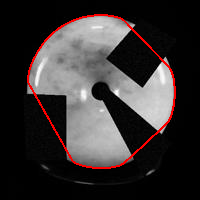}&
\includegraphics[width=2.5cm]{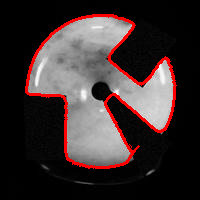}\\
\includegraphics[width=2.5cm]{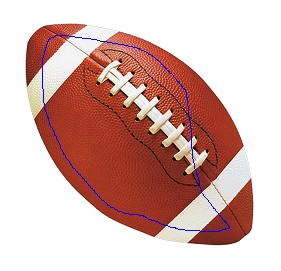}&
\includegraphics[width=2.5cm]{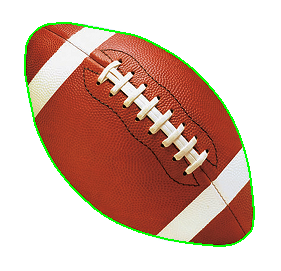}&
\includegraphics[width=2.5cm]{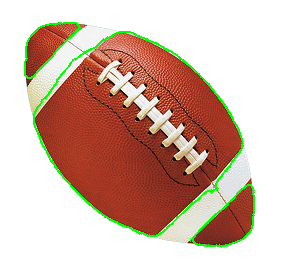}\\
\includegraphics[width=2.5cm]{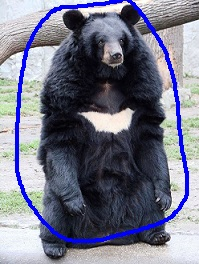}&
\includegraphics[width=2.5cm]{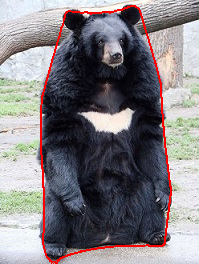}&
\includegraphics[width=2.5cm]{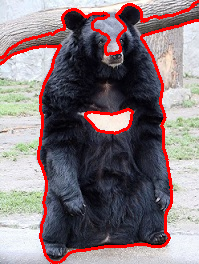}
\end{tabular}
\caption{Results by GMMC and GMM for occluded and low contrast images.
First column: Original images with initial curves; Second column: Segmentation results by GMMC; Third column: Segmentation results by GMM.}\label{FigGMM2}
\end{figure}

\begin{table}[htb]
\centering
\caption{Parameters for the images in Figure \ref{FigGMM1} and \ref{FigGMM2} }\label{TGMM}
\begin{tabular}{ccccccccc}
\hline\hline
image     &horse    &moon     &gear   &leaf   &occlusion fan    &football \\
\hline
$[w_0,w_1]$&$[2,0.5]$&$[2,0.8]$&$[2,1]$&$[2,1]$&$[2,0.8]$&$[1,0.5]$\\
image &teapot   &   jade&bear&red fan&apple\\
$[w_0,w_1]$&$[2,0.4]$&$[2,1.5]$&$[2,1]$&$[2,1.5]$&$[2,1.5]$\\
\hline\hline
\end{tabular}
\end{table}

In order to investigate the effects of the parameters $w_0,w_1$,
experiments for the same images were conducted by GMMC and GMM with different parameters.
The results by three sets of parameters are illustrated in Figure \ref{FigGMM3}.  They show that GMM is
more sensitive than GMMC to parameter variations.
\begin{figure}[htb]
\centering
\begin{tabular}{ccc}
\includegraphics[width=2.5cm]{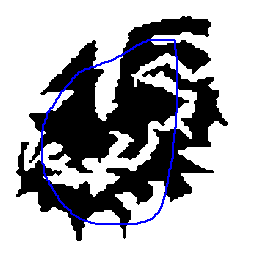}&
\includegraphics[width=2.5cm]{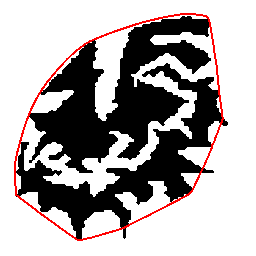}&
\includegraphics[width=2.5cm]{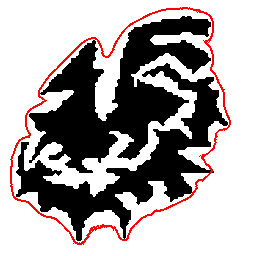}\\
\includegraphics[width=2.5cm]{image/GMM/img19/Curv}&
\includegraphics[width=2.5cm]{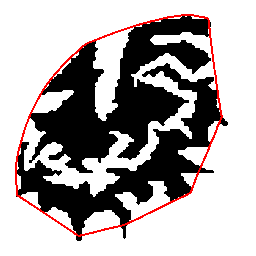}&
\includegraphics[width=2.5cm]{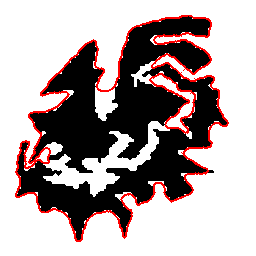}\\
\includegraphics[width=2.5cm]{image/GMM/img19/Curv}&
\includegraphics[width=2.5cm]{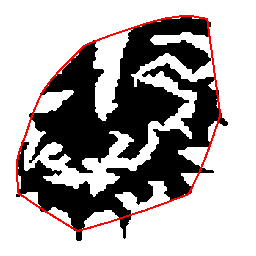}&
\includegraphics[width=2.5cm]{image/GMM/img19/exp1/nGseg}\\
\end{tabular}
\caption{Image segmentation with convex constraints. First column: Original images with initial curves; Second column: Segmentation results by GMMC; Third column: Segmentation results by GMM. The parameters $w_0,w_1$ for images in rows from up to bottom  are $[2,0.2]$, $[2,0.5]$ and $[2,1]$.}\label{FigGMM3}
\end{figure}
\subsection{Comparison between GMMLC and GMML}
In this subsection, we present some results by GMMLC and GMML to
show the efficiency of GMMLC for complex image segmentation.
These images can be categorized into two groups illustrated in Figure \ref{FigE1} and Figure \ref{FigE3}, respectively.
The first group includes images with nonuniform object intensities,
such as the ink painting fish, Yin-Yang, magic cubic.
The second group includes some low contrast images, such as
 the Pentagon,  medical image and colored stone.
The parameters $w_0$ $w_1$, and the number of landmarks are tabulated in Table \ref{TabE}. The landmarks are marked as red points in the original images
(first column in Figure \ref{FigE1} and \ref{FigE3}). The initial
curves for these images are tabulated in the second column of Figures \ref{FigE1} and \ref{FigE3}.
The segmentation results by GMMLC and GMML are displayed in the third and fourth columns of Figure \ref{FigE1} and
\ref{FigE3}.

The experiments show that the
convexity prior plays an important role for
GMMLC. By comparing the results in Figure \ref{FigE1} and Figure \ref{FigE3},
it is obvious that the given landmarks can
help GMMLC to get the correct shape boundary, but fails for GMML.
We observed that the landmarks helped to drag the zero level set of $\phi$ globally and gradually to the
object contour for GMMLC because the
convexity shape constraint do not allow the curve to split. However,
the landmarks can only affect the evolution of the zero level set curve locally for GMML.

Figure \ref{FigE1} shows some segmentation results
for images with nonuniform intensities in the object domain, such as the ink painting fish, Yin-Yang.
Because the intensities of the object varies dramatically, such as ink painting fish and the soccer ball,
 it is very challenging to get the correct object contour. With the help of landmarks on the boundary,
the results show that GMMLC can get the object boundary successfully.
However, the landmarks helps little for GMML, and we can't get meaningful segmentation results by GMML.

\begin{table}[htb]
\centering
\caption{Model parameters and landmark point number for images in \ref{FigE1} and \ref{FigE3}.}\label{TabE}
\begin{tabular}{ccccccccc}
\hline\hline
Image&Pentagon&magic cube&moon&medImag\\
\hline
\\
Landmarks&8&6&4&6
\\
$[w_0,w_1]$&$[1,1.5]$&$[1,0.5]$&$[1,0.5]$&$[1,0.1]$\\
\\
Image&ship&fish&QR code&building
\\
Landmarks&5&7&4&8
\\
$[w_0,w_1]$&$[1,0.8]$&$[1,0.1]$&$[1,0.8]$&$[1,0.4]$\\
Image&Yin-Yang&abacus& soccer&color stone
\\
Landmarks&8&5&5&4
\\
$[w_0,w_1]$&$[1,0.5]$&$[1,1]$&$[1,0.5]$&$[1,0.5]$\\
\hline\hline
\end{tabular}
\end{table}
\begin{figure}[htb]
\centering
\begin{tabular}{cccc}
\includegraphics[width=1.8cm]{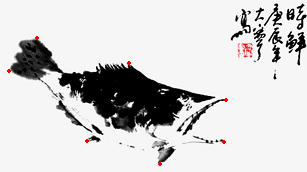}&
\includegraphics[width=1.8cm]{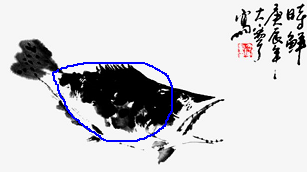}&
\includegraphics[width=1.8cm]{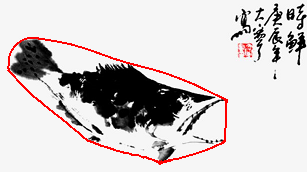}&
\includegraphics[width=1.8cm]{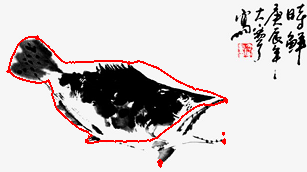}\\
\includegraphics[width=1.8cm]{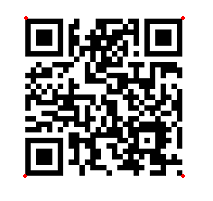}&
\includegraphics[width=1.8cm]{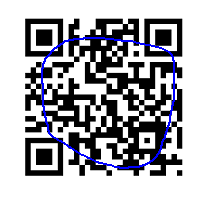}&
\includegraphics[width=1.8cm]{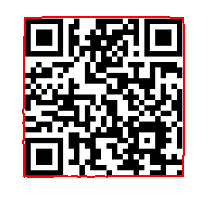}&
\includegraphics[width=1.8cm]{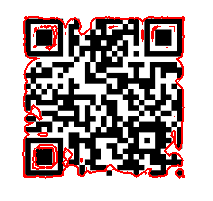}\\
\includegraphics[width=1.8cm]{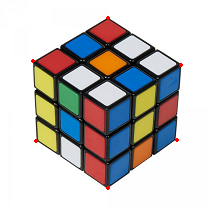}&
\includegraphics[width=1.8cm]{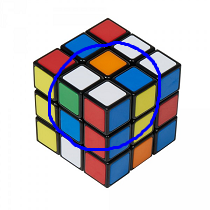}&
\includegraphics[width=1.8cm]{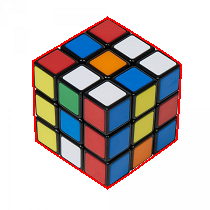}&
\includegraphics[width=1.8cm]{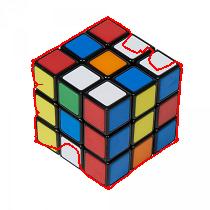}\\
\includegraphics[width=1.8cm]{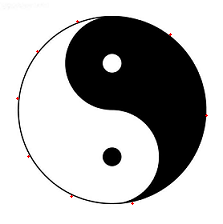}&
\includegraphics[width=1.8cm]{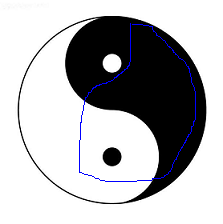}&
\includegraphics[width=1.8cm]{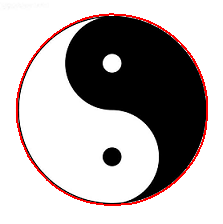}&
\includegraphics[width=1.8cm]{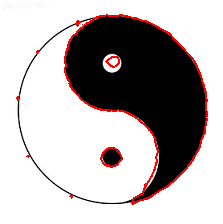}\\
\includegraphics[width=1.8cm]{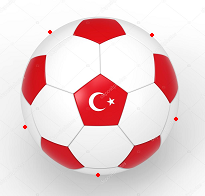}&
\includegraphics[width=1.8cm]{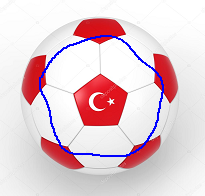}&
\includegraphics[width=1.8cm]{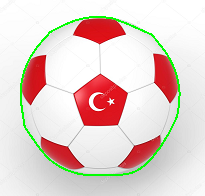}&
\includegraphics[width=1.8cm]{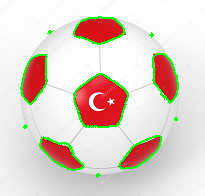}\\
\includegraphics[width=1.8cm]{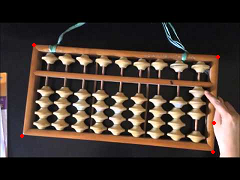}&
\includegraphics[width=1.8cm]{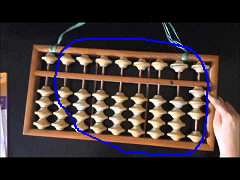}&
\includegraphics[width=1.8cm]{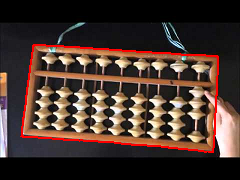}&
\includegraphics[width=1.8cm]{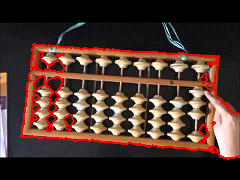}\\
\includegraphics[width=1.8cm]{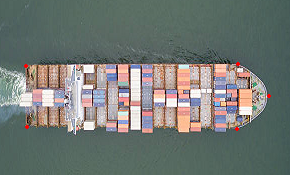}&
\includegraphics[width=1.8cm]{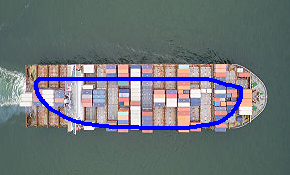}&
\includegraphics[width=1.8cm]{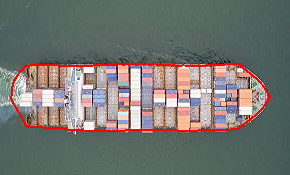}&
\includegraphics[width=1.8cm]{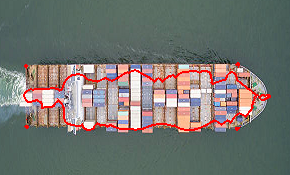}
\end{tabular}
\caption{Segmentation results for image with nonuniform intensity object by GMMLC and GMML. }\label{FigE1}
\end{figure}
Some experiments of low contrast images are presented in Figure \ref{FigE3}.
The intensities contrast near the object boundary is very low,
such as the boundary of colored stone and Pentagon. Therefore,
it is very hard to get the object boundary accurately. With the help of landmarks, the convexity shape model can
get the object contour successfully.
\begin{figure}[htb]
\centering
\begin{tabular}{cccc}
\includegraphics[width=1.8cm]{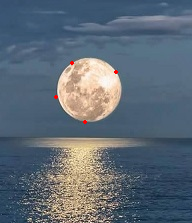}&
\includegraphics[width=1.8cm]{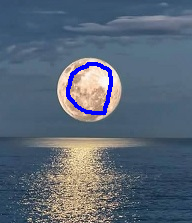}&
\includegraphics[width=1.8cm]{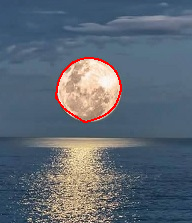}&
\includegraphics[width=1.8cm]{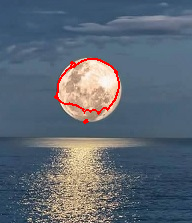}\\
\includegraphics[width=1.8cm]{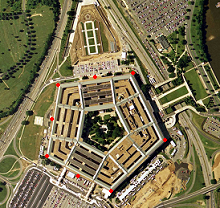}&
\includegraphics[width=1.8cm]{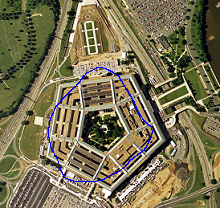}&
\includegraphics[width=1.8cm]{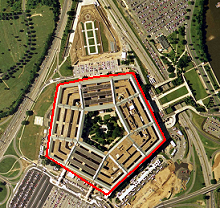}&
\includegraphics[width=1.8cm]{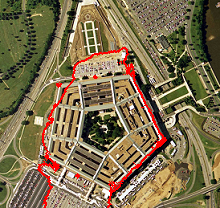}\\
\includegraphics[width=1.8cm]{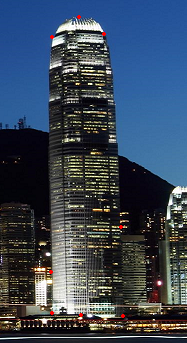}&
\includegraphics[width=1.8cm]{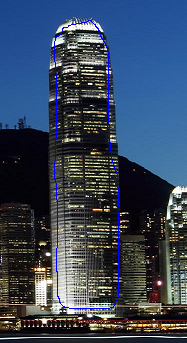}&
\includegraphics[width=1.8cm]{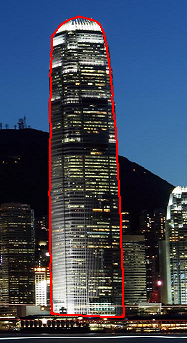}&
\includegraphics[width=1.8cm]{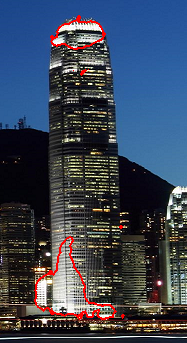}\\
\includegraphics[width=1.8cm]{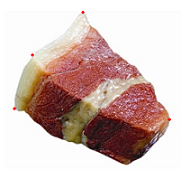}&
\includegraphics[width=1.8cm]{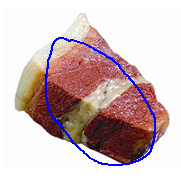}&
\includegraphics[width=1.8cm]{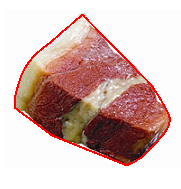}&
\includegraphics[width=1.8cm]{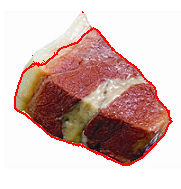}\\
\includegraphics[width=1.8cm]{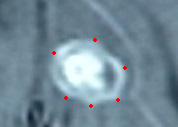}&
\includegraphics[width=1.8cm]{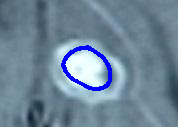}&
\includegraphics[width=1.8cm]{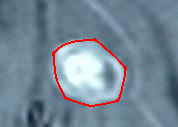}&
\includegraphics[width=1.8cm]{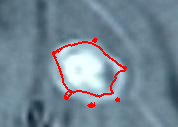}
\end{tabular}
\caption{Segmentation results for image with low contrast near boundary by GMMLC and GMML.}\label{FigE3}
\end{figure}

In the end, we want to say some thing about the initial curve for GMMLC.
Our experiences show that it is better to give an initial curve such that the given landmarks are outside it.
For this case, the iteration sequence normally have
high convergence rate.  We give an intuitive explanations for this observation.  On the one hand,  the penalty
term with respect to the given landmarks will make the zero level set (curve) pass through the landmark points,
i.e. some points on the zero level set curve move to the landmarks.
Therefore, the zero level set of $\phi$ to be concave gradually between two landmark points.
On the other hand, the convexity constraint will make all the curve move outer to keep it convex.
Conversely, if the initial curve is given such that the landmarks are inside it, the
 penalty term  would drag the curve to move in partially,
 which would make the SDF or zero level set curve nonconvex near the given landmarks. It is
 contradict to convexity constraint,  and results in the low convergence rate of iteration sequence.

\subsection{Comparison between RP and RPC}
In this section some numerical results by RPC and RP are
presented in Figure \ref{FigR}. Red parts of the images are the labelled points for the object and green points
are labelled points for the background (see the image in second column of Figure \ref{FigR}).
The parameters $w_0=w_1=1$ are used for all images.

The results in Figure \ref{FigR} show the high performance of RPC for complex and low contrast and occluded image segmentations.
We can see that RPC is able to give us correct and desirable segmentation results under the given labels.
However, the segmentation results by RP without convexity shape constraint is incorrect even though the same labels are given.
\begin{figure}[htb]
\centering
\begin{tabular}{ccccc}
\includegraphics[width=1.8cm]{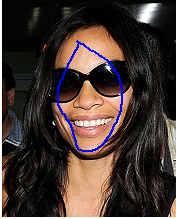}&
\includegraphics[width=1.8cm]{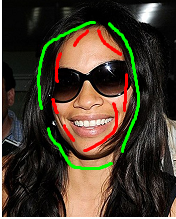}&
\includegraphics[width=1.8cm]{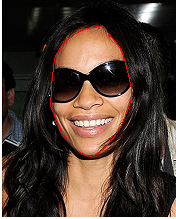}&
\includegraphics[width=1.8cm]{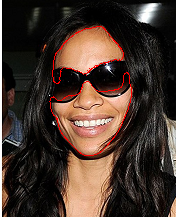}
\\
\includegraphics[width=1.8cm]{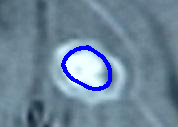}&
\includegraphics[width=1.8cm]{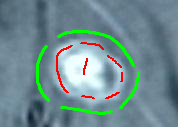}&
\includegraphics[width=1.8cm]{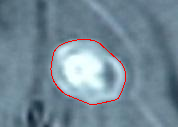}&
\includegraphics[width=1.8cm]{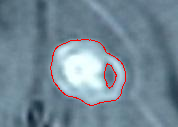}\\
\includegraphics[width=1.8cm]{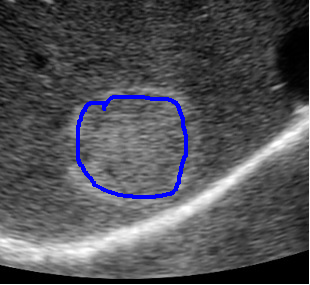}&
\includegraphics[width=1.8cm]{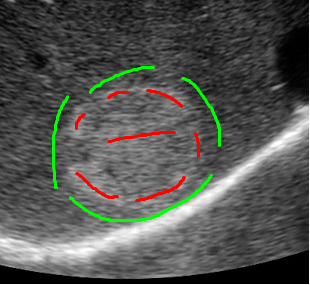}&
\includegraphics[width=1.8cm]{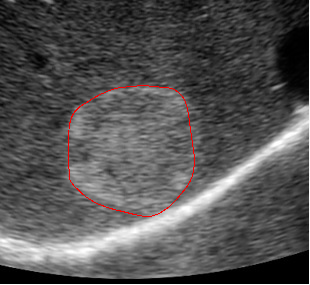}&
\includegraphics[width=1.8cm]{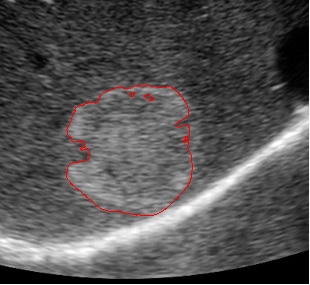}\\
\includegraphics[width=1.8cm]{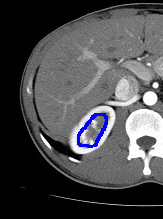}&
\includegraphics[width=1.8cm]{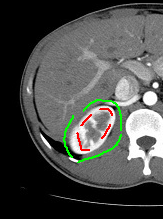}&
\includegraphics[width=1.8cm]{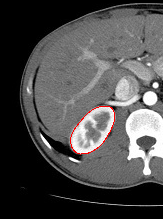}&
\includegraphics[width=1.8cm]{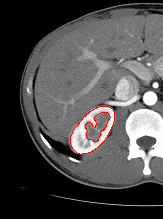}
\end{tabular}
\caption{Experiment results by RPC and RP without convexity constraint.
First column: Original images with
initial curves; Second column: Images with labels, red points for object and green points for background;
Third column: results by RPC; Fourth column: results by RP.}\label{FigR}
\end{figure}

\section{Conclusions and Future Works}\label{sec6}
Image segmentation with shape priors attracted more and more
attentions recently.
In this paper we proposed three models to implement two-phase image segmentation with convexity prior. Efficient algorithms by using splitting technique  were
developed for the proposed models.
The accuracy of the proposed methods were validated by numerical experiments on various complex, occluded, low contrast and nonuniform intensity images.
The results show that the proposed models with convex shape prior can keep the shape convexity and get more accurate results than the  models without convex shape constraint. In the present work, we have used a simple way to calculate the region force as our focus is to show the techniques related to the convex shape prior. It is definitely worth to investigate to use more robust and accurate region and edge forces, and this will be part of our future experiments.

Obviously, models and algorithms need to be further developed for image segmentation with shape priors.
In the future, we will investigate multi-phases image segmentation method with convex shape prior.
We will incorporate other new region force terms with convex shape prior to improve the segmentation performance.
In addition, the application and acceleration of the proposed algorithm are other interested problems.
\section*{Acknowledgment}
The authors would like to thank Professor Roland Glowinski at Department of Mathematics,
 University Houston for his valuable suggestions on the algorithm for the proposed model.
The first author is supported by the National Natural Science Foundations of China (11401171,11471101).
\bibliographystyle{ieeetr}
\bibliography{ref}

\begin{thebibliography}{10}

\bibitem{Mumford1989}
D.~Mumford and J.~Shah, ``Optimal approximations by piecewise smooth functions
  and associated variational problems,'' {\em Communications on Pure and
  Applied Mathematics}, vol.~42, no.~5, 1989.

\bibitem{Chan2001}
T.~F. Chan and L.~A. Vese, ``Active contours without edges,'' {\em IEEE
  Transactions on Image Processing}, vol.~10, no.~2, pp.~266--277, 2001.

\bibitem{Caselles1995}
V.~Caselles, R.~Kimmel, and G.~Sapiro, ``Geodesic active contours,'' in {\em
  International conference on computer vision}, pp.~694--499, 1995.

\bibitem{Ashburner2005Unified}
J.~Ashburner and K.~Friston, ``Unified segmentation,'' {\em NeuroImage},
  vol.~26, no.~3, p.~839–851, 2005.

\bibitem{Leventon2003Statistical}
M.~E. Leventon, W.~E.~L. Grimson, and O.~Faugeras, ``Statistical shape
  influence in geodesic active contours,'' in {\em IEEE Embs International
  Summer School on Biomedical Imaging}, pp.~316--322, 2000.

\bibitem{Rousson2002Shape}
M.~Rousson and N.~Paragios, ``Shape priors for level set representations,'' in
  {\em European Conference on Computer Vision}, pp.~78--92, 2002.

\bibitem{Cremers2003Towards}
D.~Cremers and N.~Sochen, ``Towards recognition-based variational segmentation
  using shape priors and dynamic labeling,'' in {\em International Conference
  on Scale Space Methods in Computer Vision}, pp.~388--400, 2003.

\bibitem{Chan2005Level}
T.~Chan and W.~Zhu, ``Level set based shape prior segmentation,'' in {\em IEEE
  Conference on Computer Vision and Pattern Recognition}, vol.~2,
  pp.~1164--1170, 2005.

\bibitem{Thiruvenkadam2007Segmentation}
S.~R. Thiruvenkadam, T.~F. Chan, and B.~W. Hong, ``Segmentation under
  occlusions using selective shape prior,'' in {\em International Conference on
  Scale Space and Variational Methods in Computer Vision}, pp.~191--202, 2007.

\bibitem{Gorelick2017Convexity}
L.~Gorelick, O.~Veksler, Y.~Boykov, and C.~Nieuwenhuis, ``Convexity shape prior
  for binary segmentation.,'' {\em IEEE Transactions on Pattern Analysis \&
  Machine Intelligence}, vol.~39, no.~2, pp.~258--271, 2017.

\bibitem{Yan2018}
S.~Yan, X.~Tai, J.~Liu, and H.~Huang, ``Convexity shape prior for level set
  based image segmentation method,'' {\em arXiv preprint arXiv:1805.08676},
  2018.

\bibitem{Bae2017Augmented}
E.~Bae, X.~Tai, and W.~Zhu, ``Augmented {L}agrangian method for an {E}uler's
  elastica based segmentation model that promotes convex contours,'' {\em
  Inverse Problems and Imaging}, vol.~11, no.~1, pp.~1--23, 2017.

\bibitem{Royer2015Convexity}
L.~A. Royer, D.~L. Richmond, C.~Rother, B.~Andres, and D.~Kainmueller,
  ``Convexity shape constraints for image segmentation,'' in {\em IEEE
  Conference on Computer Vision and Pattern Recognition}, pp.~402--410, 2016.

\bibitem{Yuan2012}
Y.~Yuan, E.~Ukwatta, X.~Tai, A.~Fenster, and C.~Schn$\ddot{o}$rr, ``A fast
  global optimization-based approach to evolving contours with generic shape
  prior,'' {\em UCLA CAM Report CAM-12-38}, 2012.

\bibitem{Yuan2012An}
J.~Yuan, W.~Qiu, E.~Ukwatta, Y.~Sun, and A.~Fenster, ``An efficient convex
  optimization approach to 3d prostate {MRI} segmentation with generic star
  shape prior,'' {\em MICCAI Grand Challenge: Prostate MR Image Segmentation},
  2012.

\bibitem{Liu1999The}
Z.~Liu, D.~W. Jacobs, and R.~Basri, ``The role of convexity in perceptual
  completion: beyond good continuation.,'' {\em Vision Research}, vol.~39,
  no.~25, pp.~4244--4257, 1999.

\bibitem{Wu2010}
C.~Wu and X.~Tai, ``Augmented {L}agrangian method, dual methods, and split
  {B}regman iteration for {ROF}, vectorial {TV} and high order models,'' {\em
  {SIAM} Journal on Imaging Sciences}, vol.~3, no.~3, p.~300–339, 2010.

\bibitem{Tai2011A}
X.~Tai, J.~Hahn, and G.~J. Chung, ``A fast algorithm for {E}uler's elastica
  model using augmented {L}agrangian method,'' {\em {SIAM} Journal on Imaging
  Sciences}, vol.~4, no.~1, pp.~313--344, 2011.

\bibitem{Zhu2013Image}
W.~Zhu, X.~Tai, and T.~Chan, ``Image segmentation using {E}uler’s elastica as
  the regularization,'' {\em Journal of Scientific Computing}, vol.~57, no.~2,
  pp.~414--438, 2013.

\bibitem{Strange1999DCT}
G.~Strang, ``The discrete cosine transform,'' {\em SIAM Review}, vol.~41,
  no.~1, p.~135–147, 1999.

\bibitem{Osher1988Fronts}
S.~Osher and J.~Sethian, ``Fronts propagating with curvature-dependent speed:
  algorithms based on the {H}amilton-{J}acobi formulation,'' {\em Journal of
  Computational Physics}, vol.~79, pp.~12--49, 1988.

\bibitem{Tai2017simple}
X.~Tai and J.~Duan, ``A simple fast algorithm for minimization of the elastica
  energy combining binary and level set representations,'' {\em International
  Journal of Numerical Analysis and Modeling}, vol.~14, no.~6, pp.~809--821,
  2017.

\bibitem{Lie2006}
J.~Lie, M.~Lysaker, and X.~Tai, ``A variant of the level set method and
  applications to image segmentation,'' {\em Mathematics of Computation},
  vol.~75, no.~255, pp.~1155--1174, 2006.

\bibitem{Glowinski2016Some}
R.~Glowinski, T.~W. Pan, and X.~C. Tai, ``Some facts about operator-splitting
  and alternating direction methods,'' in {\em Splitting Methods in
  Communication, Imaging, Science, and Engineering} (R.~Glowinski, S.~Osher,
  and W.~Yin, eds.), pp.~19--94, Springer, 2016.

\bibitem{Yin2017Effective}
K.~Yin and X.~Tai, ``An effective region force for some variational models for
  learning and clustering,'' {\em Journal of Scientific Computing}, vol.~74,
  no.~1, pp.~175--196, 2018.

\bibitem{Wei2017New}
K.~Wei, K.~Yin, X.~Tai, and T.~Chan, ``New region force for variational models
  in image segmentation and high dimensional data clustering,'' in {\em Annals
  of Mathematical Sciences and Applications}, vol.~3, pp.~255--286, 2018.

\bibitem{Khadeeja2018Auto}
K.~Mohiuddin and J.~W.~L. Wan, ``Automated segmentation of cellular images
  using an effective region force,'' in {\em SPIE: Medical Imaging},
  vol.~10574, pp.~1--19, 2018.

\bibitem{Xiong2016}
T.~Xiong, L.~Zhang, and Z.~Yi, ``Double gaussian mixture model for image
  segmentation with spatial relationships,'' {\em Journal of Visual
  Communication and Image Representation}, vol.~34, pp.~135--145, 2016.

\bibitem{Marco2008}
M.~Alf$grave{o}$, L.~Nieddu, and D.~Vicari, ``A finite mixture model for image
  segmentation,'' {\em Statistics and Computing}, vol.~18, no.~2, pp.~137--150,
  2008.

\bibitem{McLachlan2007EM}
G.~McLachlan and T.~Krishan, {\em The {EM} Algorithm and Extensions}.
\newblock New York: Wiley, 2007.

\bibitem{Glowinski2013On}
R.~Glowinski and Q.~A., ``On an inequality of c. sundberg: A computational
  investigation via nonlinear programming,'' {\em Journal of Optimization
  Theory \& Applications}, vol.~158, no.~3, pp.~739--772, 2013.

\bibitem{Sethian1996A}
J.~Sethian, ``A fast marching method for monotonically advancing fronts,'' in
  {\em Proceedings of the National Academy of Sciences}, vol.~93,
  pp.~1591--1595, 1996.

\bibitem{Tsai2004Fast}
Y.~H.~R. Tsai, L.~T. Cheng, S.~Osher, and H.~K. Zhao, ``Fast sweeping
  algorithms for a class of hamilton-jacobi equations. siam j. numer. anal.
  41(2), 659-672,'' {\em {SIAM} Journal on Numerical Analysis}, vol.~41, no.~2,
  pp.~673--694, 2004.

\bibitem{Zhao2005A}
H.~Zhao, ``A fast sweeping method for {E}ikonal equations,'' {\em Mathematics
  of Computation}, vol.~74, no.~250, pp.~603--627, 2005.

\end{thebibliography}
\end{document}